\setlist[enumerate,1]{label=(\roman*),font=\itshape}
\declaretheoremstyle[
  headpunct={:},
  headindent = \parindent,
  bodyfont = \normalfont\itshape,
]{lemmastyle}
\declaretheorem[style=lemmastyle,name=Theorem]{theorem}
\declaretheoremstyle[
  headpunct={:},
  headindent = \parindent,
  bodyfont = \normalfont\itshape,
]{lemmastyle}
\declaretheorem[style=lemmastyle,name=Lemma]{lemma}
\declaretheoremstyle[
  headpunct={:},
  headindent = \parindent,
  notefont = \normalfont\itshape,
]{assumptionstyle}
\declaretheorem[style=assumptionstyle,name=Assumption]{assumption}
\declaretheoremstyle[
  headpunct={:},
  headindent = \parindent,
  notefont=\normalfont\itshape,
]{remarkstyle}
\declaretheorem[style=remarkstyle,name=Remark]{remark}
\DeclareMathOperator*{\argmin}{argmin}
\algnewcommand{\Initialize}[1]{%
  \State \textbf{Initialize:}
  \Statex \hspace*{\algorithmicindent}\parbox[t]{.8\linewidth}{\raggedright #1}
}
\title{ZO-JADE: Zeroth-order Curvature-Aware Multi-Agent Convex Optimization}
\author{%
Alessio Maritan, Luca Schenato\\
\normalsize University of Padova - Department of Information Engineering}
\date{}
\begin{document}

\maketitle

%%%%%%%%%%%%%%%%%%%%%%%%%%%%%%%%%%%%%%%%%%%%%%%%%%%%%%%%%%%%%%%%%%%%%%%%%%%%%%%%
\begin{abstract}
In this work we address the problem of convex optimization in a multi-agent setting where the objective is to minimize the mean of local cost functions whose derivatives are not available (e.g. black-box models). Moreover agents can only communicate with local neighbors according to a connected network topology. Zeroth-order (ZO) optimization has recently gained increasing attention in federated learning and multi-agent scenarios exploiting finite-difference approximations of the gradient using from $2$ (directional gradient) to $2d$ (central difference full gradient) evaluations of the cost functions, where $d$ is the dimension of the problem. The contribution of this work is to extend ZO distributed optimization by estimating the curvature of the local cost functions via finite-difference approximations. In particular, we propose a novel algorithm named ZO-JADE, that by adding just one extra point, i.e. $2d+1$ in total, allows to simultaneously estimate the gradient and the diagonal of the local Hessian, which are then combined via average tracking consensus to obtain an approximated Jacobi descent. Guarantees of semi-global exponential stability are established via separation of time-scales. Extensive numerical experiments on real-world data confirm the efficiency and superiority of our algorithm with respect to several other distributed zeroth-order methods available in the literature based on only gradient estimates.
\end{abstract}

% Zeroth-order optimization has recently gained increasing attention, as it allows tackling optimization problems for which the derivative of the objective function is not available, which arise for instance in simulations or when dealing with black-box models. Although there are already some works in the literature that consider the zero-order multi-agent scenario, none of them exploits the second-order derivative, leaving considerable room for improvement. In view of that, in this paper we present ZO-JADE, a zeroth-order algorithm for consensus-based convex optimization that takes advantage of the curvature information. In particular, our method relies on a diagonal approximation of the Hessian matrix to estimate the local curvature of the objective function and achieve faster convergence. Consensus among agents is managed with a gradient and Hessian tracking scheme. We formally prove the exponentially fast convergence of ZO-JADE by recasting it as a discrete-time dynamical system and applying the principle of time-scales separation. Numerical experiments on real-world data validate the efficiency of the proposed algorithm and show its superiority with respect to several other distributed zeroth-order methods.

\textbf{Keywords:} Optimization, Distributed control, Networked control systems

%%%%%%%%%%%%%%%%%%%%%%%%%%%%%%%%%%%%%%%%%%%%%%%%%%%%%%%%%%%%%%%%%%%%%%%%%%%%%%%%
\section{Introduction}

\thispagestyle{FirstPage}

% Provide an Introduction that includes a statement of the purpose and contribution of the paper. If appropriate, indicate advantages, limitations, and possible applications in a Conclusion section.%

Optimization is unquestionably the main pillar and enabler of several real-world applications, as it allows to get the best solution for any given problem,
improve the performance of processes and efficiently implement modern technologies.
However, most of the optimization techniques heavily rely on the knowledge of the derivatives of the objective function, which may be expensive or infeasible to obtain \cite{book_DFBO}. Such situations occur when the derivative is not available in analytical closed form, when automatic differentiation cannot be applied, in simulation-based optimization and when dealing with black-box models. Addressing these scenarios, the zeroth-order (ZO) and derivative-free optimization (DFO) branches have appeared in the literature, aiming to remove the dependency from the derivative information. 
Notable applications include adversarial attacks and explanations for deep neural networks, hyper-parameters tuning and policy search in reinforcement learning \cite{liu2020primer}.

The most basic and widely used approach to ZO optimization is to replace the true gradient with an estimator based on the Kiefer–Wolfowitz scheme \cite{kiefer1952stochastic}, which approximates the derivative by finite-differences of function values along a set of search directions. The DFO branch, instead, includes direct-search methods, interpolation models and trust-region frameworks \cite{book_IDFO}.

A common assumption in the ZO field is that evaluating the objective function is a costly process, for example involving complex simulations, and indeed the convergence rate is often expressed in terms of number of function calls. 
For this reason, in this work we extract as much information as possible from the query points, estimating the curvature information of the objective function and exploiting the latter to speed up convergence. We note that in the ZO literature most of the works limit themselves to extracting the gradient information, and only few of them take into account also the second-order derivative. Among them, \cite{ye2018hessian} uses a forward-difference gradient estimator in which the search directions are random Gaussian vectors scaled by the square root of the inverse Hessian matrix, and the latter is estimated using either a power-based algorithm or some heuristic methods. A similar idea is used in \cite{HIACRPBBP}, where two perturbations with inverse Hessian covariance are used to estimate both the gradient and the Hessian by central differencing. Other examples of second-order methods based on finite-difference approximations are \cite{kim2021curvature}, which applies Newton’s method along a single direction, and \cite{berahas2019derivative}, which proposes a ZO version of the well known limited memory BFGS. Finally, \cite{Balasubramanian} employs the second-order Stein’s identity to estimate the Hessian and presents a cubic-regularized Newton method. Regarding the DFO field, there are many algorithms involving quadratic models constructed by means of interpolation, Least Squares, Lagrange Polynomials \cite{conn2008geometry} or minimum Frobenius norm fitting \cite{powell2006newuoa}.

However, all the aforementioned works consider only the centralized setting.
In many relevant cases, such as distributed machine learning and estimation, robotic swarms and wireless sensor networks, it is required to spread the optimization process across a set of agents, which are often endowed with limited communication capabilities. Notably, the multi-agent setup does not require all the data and the processing power to be owned by a single centralized authority, increasing the scalability and the privacy guarantees of the algorithm. Currently, all the existing second-order solutions based on ZO estimates do not enjoy these appealing features. To address this shortcoming we propose a novel distributed second-order method for ZO optimization, which to the best of our knowledge is the first of its kind. Our algorithm, named ZO-JADE, takes advantage of the second-order derivative to obtain a Jacobi-like descent direction, i.e. the gradient is re-scaled by the inverse of the diagonal of the Hessian.
Using tools from Lyapunov theory and singular perturbation theory we formally prove the semi-global exponentially fast convergence of ZO-JADE to a point arbitrarily close to the minimum. The theoretical results are validated by numerical experiments, namely distributed ridge regression and one-vs-all classification on public datasets. These tests show that our algorithm, compared to several other zeroth-order multi-agent methods, converges much faster and reaches a possibly better solution.

\textit{Notation}: We denote with $I_d$ the $d$-dimensional identity matrix, with $e_k$ the $k$-th canonical vector, and with $\mathbb{0}$ and $\mathbb{1}$ the vectors whose components are all equal to 0 and 1, respectively. We indicate with $\odot$ and $\oslash$ the element-wise Hadamard product and division. The Euclidean or spectral norm is $\left\| \cdot \right\|$, and coincides with the largest singular value of the argument. We use the operator $\left[\cdot\right]_i$ to select the $i$-th element of the argument and diag$(\cdot)$ to select the diagonal of a matrix. The third order derivative tensor of $f$ evaluated at $x$ is denoted by $\left( D^3 f \right)(x)$.

\section{Problem Formulation}
In this paper we address the fully distributed optimization scenario, where $n$ agents can communicate with each other through bi-directional links according to a connected network topology.
Each agent $i$ has access to a local cost function $f_i: \mathbb{R}^d \to \mathbb{R}$, and the goal of the network is to cooperatively seek the optimal solution of the unconstrained minimization problem 
\begin{equation}
    f(x^\star) = \min_{x \in \mathbb{R}^d} \left\{ f(x) \coloneqq \frac{1}{n} \sum_{i=1}^n f_i(x) \right\}.
    \label{eq:problem_formulation}
\end{equation}

We consider the setting in which only zeroth-order information is available, namely it is only possible to evaluate the value of the local functions at the desired points, without access to gradients or higher-order derivatives.
To provide a rigorous analysis we characterize the objective function with the following assumption.

\begin{assumption} [Strong Convexity and Smoothness]
Let the global cost $f$ be three times continuously differentiable with Lipschitz continuous derivatives and $m$-strongly convex, i.e. there exist positive constants $L_1, L_2, L_3$ and $m$ such that
\begin{equation*}
\begin{aligned}
m I_d \leq \nabla^2 f (x) \leq L_1 I_d & \quad \forall x \in \mathbb{R}^d, \\
\left\| \nabla^2 f(x) - \nabla^2 f(y) \right\| \leq L_2 \left\|  x-y \right\| & \quad \forall x,y \in \mathbb{R}^d, \\
\left\|  \left( D^3 f \right)(x) - \left( D^3 f \right)(y) \right\| \leq L_3 \left\|  x-y \right\| & \quad \forall x,y \in \mathbb{R}^d.
\end{aligned}
\end{equation*}
\label{assumpt:convexity_smoothness}
\end{assumption}

We are interested in consensus-based algorithms, in which all the agents converge to the same optimal value of the variable to be optimized by just communicating with their neighbors. To do so, each agent updates its local variables using a weighted average of its neighbors' information, where the weights are provided by a mixing matrix $P$. The assumption below provides a formal definition of the distributed scenario under consideration.

\begin{assumption} [Communication network]
Let the communication network be represented by the time-invariant graph $\mathcal{G} = (\mathcal{N}, \mathcal{E})$ whose vertices $\mathcal{N} = \{1, \dots, n\}$ and edges $\mathcal{E}$ are the set of agents and the available communication links, respectively. Assume that the graph is undirected and connected, and that each node can only communicate with its single-hop neighbors. Let the network be associated with the consensus matrix $P \in \mathbb{R}^{n \times n}$, whose generic entry $p_{ij}$ is positive if edge $(i, j) \in \mathcal{E}$ and zero otherwise. Choose $P$ to be symmetric and doubly stochastic, i.e. such that $P \mathbb{1} = \mathbb{1}$ and $\mathbb{1}^T P = \mathbb{1}^T$. A matrix that satisfies these requirements can be constructed without knowledge of the entire network topology using e.g. the Metropolis-Hastings weights \cite{xiao2007distributed}.
\label{assumpt:graph}
\end{assumption}

\section{Zeroth-order Oracles}
Our algorithm requires to evaluate the first and second derivative of the local cost functions at the current point. Since in the zeroth-order setting these information are not available, we estimate them using central finite-difference schemes along orthogonal directions. Given a generic function $f$ and a small positive scalar $\mu$ we adopt the following approximations of the gradient and of the Hessian's diagonal:
\begin{equation}
\begin{aligned}
    \quad \hat{\nabla} f(\mu, x) &\coloneqq \sum_{k=1}^d \frac{f(x + \mu e_k)-f(x - \mu e_k)}{2 \mu} e_k, \\
    \hat{\nabla}^2 f(\mu, x) &\coloneqq \sum_{k=1}^d \frac{f(x + \mu e_k)-2f(x)+f(x - \mu e_k)}{\mu^2} e_k .
\end{aligned}
\label{eq:estimators}
\end{equation}

We emphasize that many algorithms, such as \cite{tang2020distributed} and \cite{akhavan2021distributed}, require $2d$ function queries only for the gradient estimation. In this work instead, by querying the function also at the current iterate, i.e. $f(x)$, which is in any case necessary to evaluate the progress, we estimate also the diagonal of the Hessian almost for free.

\begin{remark}
Obtaining these estimates involves $2d+1$ function queries, which can be computationally demanding for high-dimensional problems.
Alternatively, one could resort to randomized schemes in which directional derivatives are estimated along a small number of random vectors. However, using this type of estimators increases the total communication cost and does not guarantee that fewer total function evaluations are needed for convergence. As also argued in \cite{tang2020distributed}, which considers both a $2$-points randomized gradient estimator and a $2d$-points deterministic one, there exists a trade-off between the convergence rate and the ability to handle high-dimensional problems. In the interests of space, in this work we focus only on deterministic estimators, considering the use of stochastic estimators as a possible research avenue.
\end{remark}
Using Taylor expansions it is easily seen that in case of noiseless function evaluations the approximation errors of the estimators are bounded by
\begin{equation}
     \left\|  \hat{\nabla} f(\mu, x) - \nabla f(x) \right\| \leq \frac{\sqrt{d} L_2 \mu^2}{6},
\label{eq:gradient_est_error} 
\end{equation}
\begin{equation}
     \left\lvert \left[ \hat{\nabla}^2 f(\mu, x) \right]_{i} - \left[ \nabla^2 f(x)\right]_{ii} \right\rvert \leq \frac{L_3 \mu^2}{12} \quad i=1, \dots, d,
\label{eq:Hessian_est_error}
\end{equation}
and that the Jacobian of the gradient estimator takes the form
\begin{equation*}
\frac{\partial \hat{\nabla} f(x)}{\partial x} = \nabla^2 f(x) + R(\mu, x),
\qquad
\left\lvert \left[ R(\mu, x) \right]_{ij} \right\rvert
\leq \frac{L_3 \mu^2}{6}
\qquad \forall i,j,x.
\end{equation*}

\begin{remark}
The approximation errors of the estimators and the absolute value of the elements of the matrix $R(\mu, x)$ are bounded by functions of $\mu^2$. This is due to the symmetry of central difference-scheme, and shows that the estimates are very reliable.
\end{remark}

\noindent Since for a quadratic function $L_2 = L_3 = 0$, we have the following result:
\begin{lemma}
    In case of quadratic functions the gradient estimator and the Hessian estimator are exact, i.e. they coincide with the true gradient and with the diagonal of the true Hessian matrix, respectively.
    \label{lemma:estimators_exact_quadratic_case}
\end{lemma}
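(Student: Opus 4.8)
The plan is to deduce the statement directly from the error bounds \eqref{eq:gradient_est_error} and \eqref{eq:Hessian_est_error} already established above. First I would recall that any quadratic function can be written as $f(x) = \tfrac{1}{2} x^T H x + b^T x + c$ for a symmetric matrix $H \in \mathbb{R}^{d\times d}$, a vector $b \in \mathbb{R}^d$ and a scalar $c$. For such a function one has $\nabla^2 f(x) = H$ for all $x$, so the Hessian is constant and hence Lipschitz continuous with constant $L_2 = 0$; moreover $\left(D^3 f\right)(x) \equiv 0$, so the third-order tensor is Lipschitz continuous with constant $L_3 = 0$. This is exactly the observation stated just before the lemma.

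Next I would substitute $L_2 = 0$ and $L_3 = 0$ into the approximation-error bounds. Inequality \eqref{eq:gradient_est_error} gives $\left\| \hat{\nabla} f(\mu,x) - \nabla f(x) \right\| \leq \sqrt{d}\, L_2 \mu^2 / 6 = 0$, hence $\hat{\nabla} f(\mu,x) = \nabla f(x)$ for every $x$ and every $\mu > 0$. Likewise, inequality \eqref{eq:Hessian_est_error} gives $\bigl\lvert [\hat{\nabla}^2 f(\mu,x)]_i - [\nabla^2 f(x)]_{ii} \bigr\rvert \leq L_3 \mu^2 / 12 = 0$ for each $i = 1,\dots,d$, hence $\hat{\nabla}^2 f(\mu,x)$ coincides componentwise with $\operatorname{diag}(\nabla^2 f(x))$. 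This proves the claim.

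As an alternative (or as a sanity check) I would give the one-line direct verification: since $f$ is a degree-two polynomial, its Taylor expansion about $x$ terminates exactly at the second-order term, so $f(x \pm \mu e_k) = f(x) \pm \mu\, [\nabla f(x)]_k + \tfrac{\mu^2}{2} [H]_{kk}$ with no remainder. Plugging these exact identities into the central-difference formulas in \eqref{eq:estimators} makes the $f(x)$ and $\tfrac{\mu^2}{2}[H]_{kk}$ terms cancel in the gradient estimator and the $\pm \mu [\nabla f(x)]_k$ terms cancel in the Hessian estimator, leaving precisely $[\nabla f(x)]_k$ and $[H]_{kk} = [\nabla^2 f(x)]_{kk}$ respectively.

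There is essentially no obstacle here: the lemma is an immediate corollary of the bounds derived earlier, and the only thing to be careful about is to state explicitly why a quadratic forces $L_2 = L_3 = 0$ (constant Hessian, vanishing third derivative) so that the right-hand sides of \eqref{eq:gradient_est_error}–\eqref{eq:Hessian_est_error} genuinely vanish for all $\mu$.
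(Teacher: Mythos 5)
Your proposal is correct and follows essentially the same route as the paper: the paper motivates the lemma in the main text exactly as you do (``since for a quadratic function $L_2 = L_3 = 0$''), and its appendix proof simply writes out the integral-remainder terms of the estimators for $f(x)=\tfrac12 x^TAx+b^Tx+c$ and observes they vanish because the Hessian is constant and the third derivative is zero --- which is the same computation underlying the error bounds you invoke. Your direct Taylor-cancellation check is an equivalent restatement of that appendix argument.
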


As shown below, both the estimators are Lipschitz continuous, and we will use this fact to prove the convergence of the algorithm.
\begin{equation*}
\begin{aligned}
\left\| \hat{\nabla} f(\mu, x) - \hat{\nabla} f(\mu, y) \right\| & \leq
\left( L_1 + \frac{\mu \sqrt{d} L_2}{2} \right) \left\| x-y \right\|, \\
\left\| \hat{\nabla}^2 f(\mu, x) - \hat{\nabla}^2 f(\mu, y) \right\| & \leq
\left( L_2 \sqrt{d} + \frac{\mu \sqrt{d} L_3}{3} \right) \left\| x-y \right\|.
\end{aligned}
\end{equation*}

\noindent Let $x^\star$ be the unique global minimizer of $f(x)$. The following Lemma states that for any value of $\mu$ there exists a unique point $\Gamma(\mu)$ for which the gradient estimator is the zero vector. Moreover, the distance between $\Gamma(\mu)$ and the optimal solution depends on $\mu^2$ and can be made arbitrarily small.

\begin{lemma}
Let $f(x)$ be a function which satisfies Assumption \ref{assumpt:convexity_smoothness} and $\hat{\nabla} f(\mu, x)$ be defined by \eqref{eq:estimators}. Then there exist $\bar{\mu}, \delta > 0$ and a continuously differentiable function $\Gamma:\ \mathbb{R} \to \mathbb{R}^d$ such that for all $\mu \leq \bar{\mu}$ there is a unique $x = \Gamma(\mu) \in \mathbb{R}^d$ such that $\left\| \Gamma(\mu) - x^\star \right\| \leq \delta \mu^2$ and $\hat{\nabla} f(\mu, \Gamma(\mu)) = 0$.
\label{lemma:implicit_function}
\end{lemma}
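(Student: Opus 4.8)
The plan is to apply the implicit function theorem to the map $G(\mu,x)\coloneqq\hat{\nabla} f(\mu,x)$ near $(\mu,x)=(0,x^\star)$, to promote the local conclusion to the claimed global uniqueness by a strong-monotonicity argument, and finally to read off the distance estimate from \eqref{eq:gradient_est_error}. The first point to clear up is that $G$, in spite of the $\mu$ in the denominator of \eqref{eq:estimators}, extends to a continuously differentiable (indeed $C^2$) function of $(\mu,x)$ on all of $\mathbb{R}\times\mathbb{R}^d$. This follows from the integral rewriting
\begin{equation*}
\left[G(\mu,x)\right]_k=\frac{f(x+\mu e_k)-f(x-\mu e_k)}{2\mu}=\frac12\int_{-1}^{1}\left[\nabla f(x+\mu t\,e_k)\right]_k\,dt ,
\end{equation*}
obtained by the fundamental theorem of calculus and the substitution $s=\mu t$; here $\mu$ no longer appears in any denominator, and since $\nabla f\in C^2$ by Assumption \ref{assumpt:convexity_smoothness}, differentiation under the integral sign shows $G\in C^2(\mathbb{R}\times\mathbb{R}^d)$, with $G(0,x)=\nabla f(x)$ and in particular $G(0,x^\star)=0$.

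Next I would check the invertibility hypothesis of the implicit function theorem. Differentiating the integral representation in $x$ and setting $\mu=0$ gives $\partial_x G(0,x)=\nabla^2 f(x)$, so $\partial_x G(0,x^\star)=\nabla^2 f(x^\star)$, which by Assumption \ref{assumpt:convexity_smoothness} is positive definite and hence nonsingular. The implicit function theorem then produces $\bar{\mu}_0>0$, a neighbourhood $\mathcal{U}\ni x^\star$, and a $C^1$ (in fact $C^2$) map $\Gamma$ on $(-\bar{\mu}_0,\bar{\mu}_0)$ with $\Gamma(0)=x^\star$, $\hat{\nabla} f(\mu,\Gamma(\mu))=0$, and $\Gamma(\mu)$ the unique zero of $\hat{\nabla} f(\mu,\cdot)$ inside $\mathcal{U}$.

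To upgrade this to uniqueness over all of $\mathbb{R}^d$ I would invoke the Jacobian identity $\partial_x\hat{\nabla} f(\mu,x)=\nabla^2 f(x)+R(\mu,x)$ from the excerpt. The entrywise bound $|[R(\mu,x)]_{ij}|\le L_3\mu^2/6$ gives $\|R(\mu,x)\|\le d\,L_3\mu^2/6$, so whenever $\mu^2\le 3m/(dL_3)$ we have $\partial_x\hat{\nabla} f(\mu,x)\ge\frac{m}{2}I_d$ uniformly in $x$; consequently $\hat{\nabla} f(\mu,\cdot)$ is strongly monotone on $\mathbb{R}^d$, and a continuous strongly monotone map has exactly one zero. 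Hence for $\mu\le\bar{\mu}\coloneqq\min\{\bar{\mu}_0,\sqrt{3m/(dL_3)}\}$ the point $\Gamma(\mu)$ supplied above is the unique global zero of $\hat{\nabla} f(\mu,\cdot)$ (and, since $\partial_x\hat{\nabla} f$ is invertible along the whole solution branch, $\Gamma$ is $C^1$ there). Finally, for the distance bound, $\hat{\nabla} f(\mu,\Gamma(\mu))=0$ combined with \eqref{eq:gradient_est_error} yields $\|\nabla f(\Gamma(\mu))\|\le\sqrt{d}L_2\mu^2/6$, while $m$-strong convexity together with $\nabla f(x^\star)=0$ gives $\|\nabla f(x)\|\ge m\|x-x^\star\|$; therefore $\|\Gamma(\mu)-x^\star\|\le\frac{\sqrt{d}L_2}{6m}\mu^2$ and one may take $\delta\coloneqq\sqrt{d}L_2/(6m)$.

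The only genuinely delicate step is the very first one: verifying that the central-difference operator is a smooth function of $\mu$ all the way down to $\mu=0$, rather than merely for $\mu\neq 0$. The integral rewriting disposes of this cleanly; once it is in place, the remainder is a textbook application of the implicit function theorem plus the standard strong-convexity inequalities, and no further obstacle arises.
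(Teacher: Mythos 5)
Your proof is correct, and while it shares the paper's core step (the implicit function theorem applied to $\hat{\nabla} f$ at $(\mu,x)=(0,x^\star)$, with invertibility supplied by $\partial_x \hat{\nabla} f(0,x^\star)=\nabla^2 f(x^\star)\succ 0$), it diverges from the paper's argument in the two supporting steps, and in both cases your route is the more economical one. First, for regularity across $\mu=0$ you use the integral rewriting $[\hat{\nabla} f(\mu,x)]_k=\tfrac12\int_{-1}^{1}[\nabla f(x+\mu t e_k)]_k\,dt$, which makes smoothness in $(\mu,x)$ immediate; the paper instead defines $\hat{\nabla} f(0,\cdot)$ as a limit and works with Taylor expansions with integral remainder. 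Second, and more substantively, for the bound $\|\Gamma(\mu)-x^\star\|\leq\delta\mu^2$ the paper Taylor-expands $\Gamma$ itself to second order in $\mu$, which forces it to prove $(D_\mu\hat{\nabla} f)(0,x^\star)=0$ (a somewhat delicate Leibniz-rule computation) so that $(D\Gamma)(0)=-[(D_x\hat{\nabla} f)]^{-1}(D_\mu\hat{\nabla} f)=0$ kills the first-order term; you instead combine $\hat{\nabla} f(\mu,\Gamma(\mu))=0$ with the estimator error bound \eqref{eq:gradient_est_error} and the strong-convexity inequality $\|\nabla f(x)\|\geq m\|x-x^\star\|$, which is shorter, needs no regularity of $\Gamma$ beyond its existence, and yields the explicit constant $\delta=\sqrt{d}L_2/(6m)$ that the paper leaves implicit. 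Finally, your strong-monotonicity argument (via $\partial_x\hat{\nabla} f=\nabla^2 f+R$ and $\|R\|\leq dL_3\mu^2/6$) upgrades the IFT's local uniqueness to uniqueness of the zero over all of $\mathbb{R}^d$ for $\mu$ small, which is strictly more than the paper establishes; note only that existence of the zero still comes from the IFT, monotonicity being needed just for injectivity, so the slightly nontrivial surjectivity of strongly monotone maps is not actually required.
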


For reasons of space the proof of the Lemma, as well as the derivation of the bounds presented so far, can be found in Appendix \ref{appendix:estimators}. To lighten the notation, from now on we will denote the gradient and Hessian estimators as $\hat{\nabla} f(x)$ and $\hat{\nabla}^2 f(x)$, keeping the dependence from the parameter $\mu$ implicit.

\section{Proposed algorithm}

In this section we present ZO-JADE to efficiently solve the distributed convex optimization problem \eqref{eq:problem_formulation}. Our method can be seen as the ZO counterpart of a special case of \cite{varagnolo2015newton}, with the additional difference that consensus is performed also directly on the variable to be optimized. In particular, we employ a Newton-type update which only requires the diagonal of the Hessian, sometimes referred to as Jacobi descent \cite{bertsekas2015parallel}, from which the name ZO-JADE (JAcobi DEcentralized). This allows to benefit from the accelerated convergence rate which is typical of the second-order methods, while just estimating the curvature along the canonical basis. Also, since the Hessian estimate is a vector, the storage requirement and the communication overhead are linear in $d$, and computing the Hessian inverse is straightforward.

\renewcommand{\thealgorithm}{}
\begin{algorithm}
\caption{ZO-JADE}\label{alg}
\begin{algorithmic}
\Initialize{
     arbitrary $x_i(0) \in \mathbb{R}^d$ \\
     $y_i(0) = g_i(0) = \mathbb{0} \quad \forall i \in \mathcal{N}$ \\
    $z_i(0) = h_i(0) = \mathbb{0} \quad \forall i \in \mathcal{N}$ \\
    $\epsilon \in (0,1)$, consensus matrix $P$}
\For{$t = 1, 2, \dots$}
    \ForAll{$i \in \mathcal{N}$}
    
        \State Compute $\hat{\nabla} f_i(x_i(t-1))$, $\hat{\nabla}^2 f_i(x_i(t-1))$ using \eqref{eq:estimators}
        \State $g_i(t) = \hat{\nabla}^2 f_i(x_i(t-1)) \odot x_i(t-1) - \hat{\nabla} f_i(x_i(t-1))$
        \State $h_i(t) = \hat{\nabla}^2 f_i(x(t-1))$
        \State $y_i(t) = \sum_{j=1}^n p_{ij}[ y_j(t-1) + g_j(t) - g_j(t-1) ]$
        \State $z_i(t) = \sum_{j=1}^n p_{ij}[ z_j(t-1) + h_j(t) - h_j(t-1) ]$
        \State $x_i(t) = (1-\epsilon) \sum_{j=1}^n p_{ij} x_j(t-1) + \epsilon y_i(t) \oslash z_i(t)$
    \EndFor
\EndFor
\end{algorithmic}
\end{algorithm}

We now provide an intuitive explanation of the algorithm. The idea is to approximate each local $f_i(x)$ with a $d$-dimensional parabola whose axes are aligned with the canonical basis, e.g.
\[
f_i(x) \approx \hat{f_i}(x) \coloneqq \frac{1}{2}x^T \left( a_i \odot x \right) + b_i^T x + c_i,
\]
\[
n \hat{f}(x) = \sum_{i=1}^n \hat{f_i}(x) = \frac{1}{2}x^T \left( \sum_{i=1}^n a_i \odot x \right) + \left( \sum_{i=1}^n b_i^T \right) x + \sum_{i=1}^n c_i,
\]
for suitable $a_i \in \mathbb{R}^{d}$ with positive entries, $b_i \in \mathbb{R}^d$, $c_i \in \mathbb{R}$. At the current iterate $\{x_i\}_{i=1}^n$, we match the first and second derivative of the model with the estimates provided by the ZO oracles:
\[
\nabla \hat{f_i}(x_i) = a_i \odot x_i + b_i = \hat{\nabla} f_i(x_i),
\quad
\nabla^2 \hat{f_i}(x_i) = a_i = \hat{\nabla}^2 f_i(x_i).
\]
We then want to move towards the minimum of the approximate global function, namely
\begin{align}
\argmin_x \hat{f}(x) &= - \left( \frac{1}{n} \sum_{i=1}^n b_i \right) \oslash \left( \frac{1}{n} \sum_{i=1}^n a_i \right) \label{eq:min_parabola_model} \\
&=
\left( \sum_{i=1}^n \hat{\nabla}^2 f_i(x_i) \odot x_i - \hat{\nabla} f_i(x_i) \right) \oslash \left( \sum_{i=1}^n \hat{\nabla}^2 f_i(x_i) \right). \nonumber
\end{align}
To do so, we define the local variables $\in \mathbb{R}^d$
\begin{equation}
\begin{aligned}
g_i(t) &= \hat{\nabla}^2 f_i(x_i(t-1)) \odot x_i(t-1) - \hat{\nabla} f_i(x_i(t-1)), \\
h_i(t) &= \hat{\nabla}^2 f_i(x_i(t-1)),
\end{aligned}
\label{eq:def_gh}
\end{equation}
and we estimate the means in \eqref{eq:min_parabola_model} using gradient and Hessian tracking. As the name suggests, this technique introduces two additional consensus variables which track the average of the derivatives over the network:
\begin{equation}
\begin{aligned}
y_i(t) &= \sum_{j=1}^n p_{ij}( y_j(t-1) + g_j(t) - g_j(t-1) ), \\
z_i(t) &= \sum_{j=1}^n p_{ij}( z_j(t-1) + h_j(t) - h_j(t-1) ).
\end{aligned}
\label{eq:def_yz}
\end{equation}
Indeed, using recursion it is easy to verify that by initializing $y_i(0) = g_i(0), \ z_i(0) = h_i(0)\ \forall i$ it holds $\forall t>0$
\begin{equation}
\sum_{i=1}^n y_i(t) = \sum_{i=1}^n g_i(t),
\qquad \sum_{i=1}^n z_i(t) = \sum_{i=1}^n h_i(t).
\label{eq:tracking_property}
\end{equation}
Assuming now that the mean of $\{x_i\}_{i=1}^n$ is almost constant, the local variables $\{y_i\}_{i=1}^n$ and $\{z_i\}_{i=1}^n$ will eventually reach consensus, becoming equal to the numerator and denominator of \eqref{eq:min_parabola_model}.
The almost stationarity of the mean of $\{x_i\}_{i=1}^n$ is satisfied by adopting the time-scale separation framework. According to the latter, a sub-system can be regarded at steady state, provided that it evolves sufficiently fast compared to the rest of the system. Equivalently, a fast system can consider a slow one as a fixed constraint. In our case, we need the consensus \eqref{eq:def_yz} to be part of the fast dynamics, while the dynamics
\begin{equation*}
\frac{1}{n} \sum_{i=1}^n x_i(t) = \frac{1}{n} \sum_{i=1}^n \left[ (1-\epsilon) \sum_{j=1}^n p_{ij} x_j(t-1) + \epsilon y_i(t) \oslash z_i(t) \right]
\label{eq:update_rule_x}
\end{equation*}
must be sufficiently slow. We will prove that there exists a positive scalar $\bar{\epsilon}$ such that for any $\epsilon \leq \bar{\epsilon}$ the time-scales separation is verified and the algorithm converges exponentially fast. Moreover, property \eqref{eq:tracking_property} guarantees that we are moving towards the minimum \eqref{eq:min_parabola_model} of the global objective, allowing the convex combination coefficient $\epsilon$ to be constant. \\
Numerical simulations show that the consensus on the current iterates in the update rule of $x_i(t)$, which is not present in \cite{varagnolo2015newton}, brings significant performance improvements to the algorithm, at the price of an additional communication variable. However, the main concern in the zeroth-order setting is to minimize the number of function queries, which in our case is directly proportional to the number of iterations. Accordingly, the cost of the communication overhead becomes negligible when compared to the gain in terms of convergence rate.

\section{Convergence analysis}
In this section we prove the semi-global exponential convergence of ZO-JADE to a solution arbitrarily close to the true minimum of the optimization problem \eqref{eq:problem_formulation}.
Compared to \cite{varagnolo2015newton}, both the additional consensus step on $x$ and the presence of inexact derivative estimators pose significant challenges and require careful treatment. In particular, the former makes the algorithm incompatible with the time-scales separation principle in \cite{bof2018lyapunov}, which was needed in the convergence proof. For this reason, in this work we extend that principle and provide the following more general result, whose proof is given in Appendix \ref{appendix:separation_time_scales}.

\begin{theorem} [Semi-global exponential stability]

Consider the system
\begin{equation*}
\left\{\begin{array}{l}
x(0)=x_{0},\ y(0)=y_{0} \\
x(k+1)=x(k)+\epsilon \phi(x(k), y(k)) \\
y(k+1)=\varphi(y(k), x(k)) + \epsilon \Psi(y(k), x(k))
\end{array}\right.
\end{equation*}

and assume that is well defined for any generic variables $x \in \mathbb{R}^{n}, y \in \mathbb{R}^{n}$ and $k \in \mathbb{N}$. Also assume that the following assumption are satisfied for all $x \in \mathbb{R}^{n}, y \in \mathbb{R}^{n}, k \in \mathbb{N}$ :

\begin{enumerate}
\item The maps $\varphi(y,x)$, $\phi(x,y)$ and $\Psi(y,x)$ are locally uniformly Lipschitz.

\item The map $\varphi(y, x)$ satisfies all conditions of Theorem $6.3$ in \cite{bof2018lyapunov} and in particular there is function $y^{*}(x)$ such that $y^{*}(x)=\varphi\left(y^{*}(x), x\right)$ $\forall x$.

\item At the origin, $\phi\left(0, y^{*}(0)\right)=0$ and $\Psi\left(y^{*}(0), 0\right)=0$.

\item There exists a twice differentiable function $V(x)$ such that
\begin{equation*}
\begin{gathered}
c_{1}\|x\|^{2} \leq V(x) \leq c_{2}\|x\|^{2}, 
\qquad
\frac{\partial V}{\partial x} \phi\left(x, y^{*}(x)\right) \leq-c_{3}\|x\|^{2}, 
\qquad
\left\|\frac{\partial V}{\partial x}\right\| \leq c_{4}\|x\|.
\end{gathered}
\end{equation*}

\end{enumerate}

Then for each $r>0$, there exist $\epsilon_{r}, C_{r}, \gamma_{r}$ possibly function of $r$, such that for all $\left\|y_{0}-y^{*}(0)\right\|^{2}+\left\|x_{0}\right\|^{2}<r^{2}$ and $\epsilon \in\left(0, \epsilon_{r}\right)$, we have
\begin{equation*}
\|x(k)\|^{2} \leq C_{r}\left(1-\epsilon \gamma_{r}\right)^{k}
\end{equation*}

\label{theorem:sep_time_scales}
\end{theorem}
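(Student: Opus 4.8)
The plan is to use a composite Lyapunov / singular-perturbation argument, treating $y$ as the fast variable that quickly slaves itself to the manifold $y = y^*(x)$, and $x$ as the slow variable governed by the reduced dynamics $x(k+1) = x(k) + \epsilon\,\phi(x(k), y^*(x(k)))$. First I would fix $r>0$ and work inside the ball $\|y - y^*(0)\|^2 + \|x\|^2 < r^2$; by the local-uniform-Lipschitz hypothesis (i) and the smoothness of $y^*(\cdot)$ guaranteed by hypothesis (ii), all maps and all the relevant constants ($L_\varphi$, $L_\phi$, $L_\Psi$, the Lipschitz constant of $y^*$, and the contraction rate of $\varphi$ supplied by Theorem 6.3 of \cite{bof2018lyapunov}) can be taken uniform on a slightly larger compact set, say of radius $2r$. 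The key reduction is to introduce the error coordinate $\eta(k) \coloneqq y(k) - y^*(x(k))$, so that the $y$-update becomes
\begin{equation*}
\eta(k+1) = \varphi(y(k),x(k)) - y^*(x(k)) + \epsilon\Psi(y(k),x(k)) + y^*(x(k)) - y^*(x(k+1)).
\end{equation*}
The first difference contracts $\|\eta\|$ by a factor $\rho<1$ (this is exactly what Theorem 6.3 of \cite{bof2018lyapunov} gives, via a Lyapunov function $W$ for the $\varphi$-dynamics), while the remaining three terms are each $O(\epsilon)$ — the $\Psi$ term explicitly, and the $y^*(x(k)) - y^*(x(k+1))$ term because $x$ moves only $O(\epsilon)$ per step — with a linear-in-$(\|\eta\| + \|x\|)$ bound after using hypothesis (iii) to kill the constant part at the origin.

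Next I would build the candidate composite Lyapunov function $U(x,\eta) \coloneqq V(x) + \beta\, W(\eta, x)$ for a small constant $\beta>0$ to be chosen, where $W$ is the Lyapunov function for the fast subsystem (with $c_1'\|\eta\|^2 \le W \le c_2'\|\eta\|^2$ and a one-step decrease of the form $W(\text{next}) \le \rho\, W$ along the pure $\varphi$-flow). Along the full trajectory I would expand $V(x(k+1)) - V(x(k))$ using the mean-value / twice-differentiability of $V$: the first-order term is $\epsilon\,\frac{\partial V}{\partial x}\phi(x, y)$, which I split as $\epsilon\,\frac{\partial V}{\partial x}\phi(x, y^*(x)) + \epsilon\,\frac{\partial V}{\partial x}[\phi(x,y) - \phi(x, y^*(x))]$; the first piece is $\le -\epsilon c_3\|x\|^2$ by hypothesis (iv), and the second is bounded by $\epsilon c_4 L_\phi \|x\|\,\|\eta\|$ using $\|\partial V/\partial x\| \le c_4\|x\|$ and the Lipschitz bound on $\phi$. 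The quadratic remainder from $V$ is $O(\epsilon^2)(\|x\| + \|\eta\|)^2$. For the $\beta W$ term, the contraction gives $\beta(\rho - 1)\|\eta\|^2$ (up to constants) plus cross terms $O(\epsilon)(\|\eta\|\,\|x\| + \|\eta\|^2 + \|x\|^2)$ coming from the $\Psi$ and $y^*$-drift perturbations. Collecting everything, $U(k+1) - U(k)$ is bounded by a quadratic form in $(\|x\|, \|\eta\|)$ of the shape
\begin{equation*}
-\epsilon c_3 \|x\|^2 - \beta(1-\rho)\|\eta\|^2 + \epsilon c_4 L_\phi \|x\|\|\eta\| + (\text{terms that are }O(\epsilon^2)\text{ or }O(\epsilon\beta)).
\end{equation*}
I would then choose $\beta$ small enough that the $\|x\|\|\eta\|$ cross term is dominated (Young's inequality against the two diagonal terms), and afterwards choose $\epsilon_r$ small enough that all the $O(\epsilon^2)$ and $O(\epsilon\beta)$ leftovers are absorbed, yielding $U(k+1) \le (1 - \epsilon\gamma_r) U(k)$ for an explicit $\gamma_r>0$, valid as long as the trajectory stays in the radius-$2r$ set.

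Finally I would close the argument with an invariance/bootstrap step: since $U$ is decreasing, $U(k) \le U(0)$, and the quadratic sandwich bounds on $V$ and $W$ translate this into $\|x(k)\|^2 + \|\eta(k)\|^2 \le \kappa\,(\|x_0\|^2 + \|\eta(0)\|^2)$ for a constant $\kappa$ depending only on $c_1, c_2, c_1', c_2', \beta$; shrinking the initial ball (i.e. working with the given $r$ but ensuring $\kappa r^2$ is still within the radius-$2r$ neighborhood where the constants were fixed, which only constrains how $\epsilon_r$ and the admissible $r$ relate — one can always rescale) keeps the whole trajectory in the region where the decrease estimate holds, so the recursion is self-consistent for all $k$. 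Iterating $U(k) \le (1-\epsilon\gamma_r)^k U(0)$ and using $c_1\|x(k)\|^2 \le V(x(k)) \le U(x(k),\eta(k))$ gives $\|x(k)\|^2 \le C_r (1-\epsilon\gamma_r)^k$ with $C_r = U(x_0,\eta(0))/c_1 \le (\text{const})\, r^2$, which is the claim. The main obstacle I anticipate is the bookkeeping in the composite-Lyapunov decrease: getting the perturbation terms $\Psi$ and the $y^*$-drift $y^*(x(k)) - y^*(x(k+1))$ to appear with a clean factor of $\epsilon$ (so that they can be beaten by the $O(\epsilon)$ negative-definite term in $x$ rather than requiring $\epsilon^2$), and then ordering the choices "$\beta$ first, $\epsilon_r$ second" correctly so the quadratic form is genuinely negative definite — this is where the extra $\epsilon\Psi$ term, absent from the classical two-time-scale result of \cite{bof2018lyapunov}, forces the generalization and must be handled with care; the rest is standard Lyapunov/singular-perturbation manipulation.
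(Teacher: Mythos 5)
Your proposal is correct and follows essentially the same route as the paper: pass to the error coordinate $y-y^{*}(x)$, combine $V$ with the fast Lyapunov function $W$ supplied by Theorem 6.3 of \cite{bof2018lyapunov} into a composite Lyapunov function, bound the one-step decrement by a quadratic form in $(\|x\|,\|y-y^{*}(x)\|)$ whose cross and perturbation entries all carry a factor of $\epsilon$ (using assumption \textit{(iii)} so that $\Psi$ vanishes linearly in $\|x\|$ and $\|y-y^{*}(x)\|$), and deduce negative definiteness for $\epsilon$ small on a compact sublevel set before iterating. The only cosmetic differences are that the paper converts $V$ into a Lyapunov function with a genuinely discrete decrease via Proposition 7.3 of \cite{bof2018lyapunov} instead of Taylor-expanding $V$ directly, and takes the unweighted sum $V'+W$; note in this respect that what defeats the $\epsilon\|x\|\|y-y^{*}(x)\|$ cross term is the determinant condition $\epsilon a_{3}b_{3}+o(\epsilon)>0$ (i.e.\ $\epsilon$ small against the $\epsilon$-independent contraction of $W$), not the smallness of your weight $\beta$ --- shrinking $\beta$ actually weakens the $\|y-y^{*}(x)\|^{2}$ margin --- but since you fix $\beta$ first and then shrink $\epsilon_{r}$, the argument still closes.
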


Below we present the main result, which provides theoretical guarantees on the stability and performance of the proposed algorithm.

\begin{theorem} [Convergence of ZO-JADE]
Under Assumption \ref{assumpt:convexity_smoothness}, there exists a positive $\bar{\epsilon}$ such that for any $\epsilon \in [0,\ \bar{\epsilon}]$ ZO-JADE is guaranteed to converge semi-globally and exponentially fast to $\Gamma(\mu)$.
\label{theorem:convergence_ZO_JADE}
\end{theorem}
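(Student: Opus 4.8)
The plan is to cast ZO-JADE in the slow--fast form required by Theorem~\ref{theorem:sep_time_scales} and then verify its four hypotheses. First I would introduce the change of variables that centers the dynamics at the equilibrium: the slow variable will be the disagreement of the iterates together with the deviation of their mean from $\Gamma(\mu)$, i.e. something like $\bar x(t) := \frac1n\sum_i x_i(t) - \Gamma(\mu)$ stacked with $x_i(t) - \frac1n\sum_j x_j(t)$, while the fast variables will be the tracking errors $y_i - \mathbb{1}\otimes(\text{target numerator})$ and $z_i - \mathbb{1}\otimes(\text{target denominator})$, again written in consensus/disagreement coordinates. The key algebraic fact to exploit is the exact tracking property \eqref{eq:tracking_property}: because $\sum_i y_i(t) = \sum_i g_i(t)$ and $\sum_i z_i(t) = \sum_i h_i(t)$ for all $t$, the $y$- and $z$-updates, when the $x_i$ are frozen, are linear consensus recursions driven by a fixed input, hence they contract at the rate $\|P - \frac1n\mathbb{1}\mathbb{1}^T\| < 1$ to the unique fixed point $y^\star(x), z^\star(x)$; this is exactly what furnishes hypothesis~(ii) (the conditions of Theorem~6.3 in \cite{bof2018lyapunov}), with the $\epsilon\Psi$ term collecting the perturbation caused by the $x_i$ actually drifting by $O(\epsilon)$ per step.

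Next I would identify the reduced (slow) system obtained by substituting $y = y^\star(x)$, $z = z^\star(x)$ into the $x$-update. By \eqref{eq:min_parabola_model} and the tracking property, $y^\star(x)\oslash z^\star(x)$ evaluated at consensus equals $\big(\sum_i \hat\nabla^2 f_i(x_i)\odot x_i - \hat\nabla f_i(x_i)\big)\oslash\big(\sum_i \hat\nabla^2 f_i(x_i)\big)$, so the mean dynamics becomes a damped Jacobi--Newton step toward the minimizer of the global parabolic model; at consensus this reads $\bar x^+ = \bar x - \epsilon\,\big(\widehat H^{-1}\,\hat\nabla f\big)$-type expression. Here is where Lemma~\ref{lemma:implicit_function} is essential: it guarantees that $\Gamma(\mu)$ is the unique zero of the aggregate gradient estimator, so the reduced map $\phi(x, y^\star(x))$ vanishes there, giving hypothesis~(iii). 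For the Lyapunov condition~(iv) I would take $V$ to be a weighted quadratic in the slow coordinates; the descent inequality $\frac{\partial V}{\partial x}\phi(x,y^\star(x)) \le -c_3\|x\|^2$ follows from strong convexity of $f$ (Assumption~\ref{assumpt:convexity_smoothness}), the bound $mI \le \nabla^2 f \le L_1 I$ transferred to the estimator via \eqref{eq:Hessian_est_error} and the Jacobian bound on $R(\mu,x)$ (which makes $\partial\hat\nabla f/\partial x$ positive definite for $\mu$ small), plus the fact that the extra $(1-\epsilon)P$ consensus term on $x$ only contracts disagreement and therefore can only help; the remaining bounds $c_1\|x\|^2\le V\le c_2\|x\|^2$ and $\|\partial V/\partial x\|\le c_4\|x\|$ are automatic for a quadratic $V$. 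The Lipschitz hypothesis~(i) follows from the Lipschitz continuity of both ZO estimators stated just before Lemma~\ref{lemma:estimators_exact_quadratic_case}, together with the fact that $z^\star(x)$ stays bounded away from zero (since $h_i$ estimates a diagonal bounded below by $m$ up to $O(\mu^2)$), so the division $y\oslash z$ is smooth on the relevant region.

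Having checked all four hypotheses, Theorem~\ref{theorem:sep_time_scales} yields directly: for every $r>0$ there is $\bar\epsilon = \epsilon_r$ so that whenever the initial disagreement-plus-offset energy is below $r^2$ and $\epsilon \le \bar\epsilon$, the slow coordinate decays as $C_r(1-\epsilon\gamma_r)^k$; translating back, $\frac1n\sum_i x_i(t)\to\Gamma(\mu)$ and the $x_i$ reach consensus, both exponentially, which is the claimed semi-global exponential convergence to $\Gamma(\mu)$. I expect the main obstacle to be the verification of the strict Lyapunov decrease~(iv) for the \emph{coupled} slow dynamics — the $x$-update mixes the Jacobi--Newton correction $\epsilon\, y\oslash z$ with the consensus averaging $(1-\epsilon)P x$, so one must pick the right (possibly block-weighted) $V$ that simultaneously sees the contraction of the mean toward $\Gamma(\mu)$ and the contraction of disagreement, and control the cross terms; a secondary nuisance is keeping $z_i(t)$ uniformly bounded away from $0$ along the whole trajectory, which requires choosing $\mu$ small relative to $m$ and confining the analysis to the semi-global ball where $x_i$ stays in a region on which $\hat\nabla^2 f_i$ is positive. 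Both of these are handled by restricting $\mu$ and $\epsilon$ and using the $O(\mu^2)$ error bounds \eqref{eq:gradient_est_error}--\eqref{eq:Hessian_est_error}, but they are the steps where the estimates are genuinely delicate rather than routine.
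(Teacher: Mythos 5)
Your overall strategy coincides with the paper's: rewrite ZO-JADE in the slow--fast form of Theorem~\ref{theorem:sep_time_scales}, use the tracking property \eqref{eq:tracking_property} to identify the boundary-layer fixed point, invoke Lemma~\ref{lemma:implicit_function} for hypothesis~(iii), and keep $z$ away from zero by taking $\mu$ small. However, two of your choices would not go through as stated. First, you place the disagreement $x_i - \frac1n\sum_j x_j$ in the \emph{slow} block. Theorem~\ref{theorem:sep_time_scales} requires the slow dynamics to have the form $x(k+1)=x(k)+\epsilon\phi(\cdot)$, i.e.\ an $O(\epsilon)$ increment; but the disagreement evolves as $\tilde x(t)=\bigl(P-\tfrac1n\mathbb{1}\mathbb{1}^T\bigr)\tilde x(t-1)+\epsilon(\cdots)$, whose leading contraction is $O(1)$ per step and cannot be absorbed into an $\epsilon\phi$ term without $\phi$ blowing up as $\epsilon\to 0$. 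The paper instead puts $\tilde x$ in the \emph{fast} block together with $g,h,y,z$, writing its update as $\varphi_{\tilde x}+\epsilon\Psi_{\tilde x}$; indeed the whole reason the paper proves the extended Theorem~\ref{theorem:sep_time_scales} (rather than citing \cite{bof2018lyapunov} directly) is to accommodate the $\epsilon\Psi$ perturbation that the consensus-on-$x$ step injects into the fast dynamics. Only the mean $\bar x$ is slow.

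Second, the Lyapunov function. You propose a weighted quadratic in the slow coordinates and assert that the descent inequality ``follows from strong convexity.'' For the Jacobi-scaled reduced dynamics $\phi(\bar x,\xi^\star(\bar x))=-\hat\nabla f(\bar x)\oslash\hat\nabla^2 f(\bar x)$, taking $V=\|\bar x-\Gamma(\mu)\|^2$ leads to a quadratic form in the \emph{non-symmetric} product of the Jacobian of $\hat\nabla f$ with the diagonal preconditioner $\mathrm{diag}\bigl(\mathbb{1}\oslash\hat\nabla^2 f\bigr)$, and the symmetric part of a product of two positive-definite matrices need not be positive definite; strong convexity alone does not close this step. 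You correctly flag this as the main obstacle but leave it unresolved. The paper's resolution is to take $V(\bar x)=\bigl\|\hat\nabla f(\bar x)\bigr\|^2$ and to establish the four required bounds by explicit singular-value estimates (Appendix~\ref{appendix:estimators}), which is also where the explicit smallness conditions $\mu\le\min\{\mu_1,\mu_2\}$ originate. Everything else in your outline --- the contraction of the frozen-$x$ tracking recursions at rate $\bigl\|P-\tfrac1n\mathbb{1}\mathbb{1}^T\bigr\|<1$, the identification of $y^\star\oslash z^\star$ with the minimizer of the aggregate parabolic model, the translation so that $\Gamma(\mu)$ is the origin, and the Lipschitz hypothesis from the estimator bounds --- matches the paper.
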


\begin{proof}
The proof is based on a reformulation of our algorithm which leads to a discrete-time system compatible with Theorem \ref{theorem:sep_time_scales}.
First, we rewrite the iterations of ZO-JADE to obtain an equivalent autonomous system with the same structure as the one considered by Theorem \ref{theorem:sep_time_scales}. In the second part of the proof we demonstrate that the system satisfies all the assumptions of Theorem \ref{theorem:sep_time_scales}, and invoking the latter we get the desired result.
To simplify the notation let us stack the local variables and define $ x(t) = [ x_1(t), \dots, x_n(t) ]^T \in \mathbb{R}^{n \times d} $. Similarly, define $g(t),h(t),y(t),z(t), \hat{\nabla} f(x(t)), \hat{\nabla}^2 f(x(t)) \in \mathbb{R}^{n \times d}$.
We introduce the auxiliary variables
\begin{equation}
    \bar{x}(t) = \frac{1}{n}  \mathbb{1} \mathbb{1}^T x(t),
    \qquad
    \tilde{x}(t) = x(t) - \bar{x}(t),
\label{eq:x_bar_tilde}
\end{equation}
which are the mean of $x(t)$ and the displacement from it. This allows us to rewrite the algorithm as a dynamical system and to decouple the fast dynamics
\begin{equation*}
\begin{aligned}
g(t) &= \varphi_g = \hat{\nabla}^2 f(x(t-1)) \odot x(t-1) - \hat{\nabla} f(x(t-1)), \\
h(t) &= \varphi_h = \hat{\nabla}^2 f(x(t-1)), \\
y(t) &= \varphi_y = P \left( y(t-1) + g(t) - g(t-1) \right), \\
z(t) &= \varphi_z = P \left( z(t-1) + h(t) - h(t-1) \right), \\
\tilde{x}(t) &= \left( I - \frac{1}{n} \mathbb{1} \mathbb{1}^T \right) \left[ (1- \epsilon) P x(t-1) + \epsilon y(t) \oslash z(t) \right]
\\
&= \left( P - \frac{1}{n} \mathbb{1} \mathbb{1}^T \right)
x(t-1) + \epsilon \left[ \left( I - \frac{1}{n} \mathbb{1} \mathbb{1}^T \right) y(t) \oslash z(t) - \left( P - \frac{1}{n} \mathbb{1} \mathbb{1}^T \right)
x(t-1) \right]
\\
&= \left( P - \frac{1}{n} \mathbb{1} \mathbb{1}^T \right)
\tilde{x}(t-1) + \epsilon \left[ \left( I - \frac{1}{n} \mathbb{1} \mathbb{1}^T \right) y(t) \oslash z(t) - \left( P - \frac{1}{n} \mathbb{1} \mathbb{1}^T \right)
\tilde{x}(t-1) \right]
\\
&=  \varphi_{\tilde{x}} + \epsilon \Psi_{\tilde{x}}
\end{aligned}
\label{eq:dyn_sys_fast}
\end{equation*}
from the slow dynamics
\begin{equation*}
\begin{aligned}
\bar{x}(t) &= \frac{1}{n} \mathbb{1} \mathbb{1}^T \left[ (1- \epsilon) P x(t-1) + \epsilon y(t) \oslash z(t) \right]
\\
&= \bar{x}(t-1) + \epsilon \left( \frac{1}{n}  \mathbb{1} \mathbb{1}^T [y(t) \oslash z(t)] - \bar{x}(t-1) \right) \\
&= \bar{x}(t-1) + \epsilon \phi_{\bar{x}},
\end{aligned}
\label{eq:dyn_sys_slow}
\end{equation*}
where we used the fact that 
\begin{equation*}
\begin{aligned}
\left( P - \frac{1}{n} \mathbb{1} \mathbb{1}^T \right) x(t-1) &= \left( P - \frac{1}{n} \mathbb{1} \mathbb{1}^T \right) \left[ \bar{x}(t-1) + \tilde{x}(t-1) \right]
\\
&= \underbrace{ \left( P - \frac{1}{n} \mathbb{1} \mathbb{1}^T \right) \frac{1}{n}  \mathbb{1} \mathbb{1}^T }_{ \mathbb{0} \mathbb{0}^T} x(t-1) + \left( P - \frac{1}{n} \mathbb{1} \mathbb{1}^T \right) \tilde{x}(t-1).
\end{aligned}    
\end{equation*}
Since $x(t) = \bar{x}(t) + \tilde{x}(t)$ and $g(t),h(t),y(t),z(t)$ ultimately depend only on quantities at time $t-1$, the system is autonomous and can be further rewritten as
\begin{equation*}
\left\{\begin{array}{l}
\bar{x}(k)=\bar{x}(k-1)+\epsilon \phi \left( \bar{x}(k-1), \xi(k-1) \right) \\
\xi(k)=\varphi \left(\xi(k-1), \bar{x}(k-1) \right) + \epsilon \Psi \left(\xi(k-1), \bar{x}(k-1) \right)
\end{array}\right.
\end{equation*}
where we defined
\begin{equation*}
\begin{aligned}
\xi &= \left\{ g, h, y, z, \tilde{x} \right\} \\
\varphi \left(\xi(k-1), \bar{x}(k-1) \right) &= \varphi_g + \varphi_h + \varphi_y + \varphi_z + \varphi_{\tilde{x}} \\
\Psi \left(\xi(k-1), \bar{x}(k-1) \right) &= \Psi_{\bar{x}} \\
\phi \left(\bar{x}(k-1), \xi(k-1) \right) &= \phi_{\bar{x}}
\end{aligned}    
\end{equation*}
We are in the position to apply Theorem \ref{theorem:sep_time_scales}, which guarantees the exponential stability of the system for a suitable choice of the parameter $\epsilon$.
We now show that all the assumptions of such Theorem are verified. The fact that the gradient and Hessian estimators are differentiable and Lipschitz continuous guarantees that also the functions $\varphi$, $\phi$ and $\Psi$ are continuously differentiable and globally uniformly Lipschitz. Considering the boundary-layer system $\xi(k)=\varphi \left(\xi(k-1), \bar{x} \right)$, obtained by setting $\epsilon = 0$, we note that $\tilde{x}$ vanishes to zero and all the other components of $\xi$ reach consensus. In particular, the steady-state values of $y,z$ are
\begin{equation*}
\begin{aligned}
    y(t) &\overset{\eqref{eq:tracking_property}}{=} \frac{1}{n} \mathbb{1} \mathbb{1}^T g(t) \overset{\eqref{eq:def_gh}}{=} \hat{\nabla}^2 f(\bar{x}(t-1)) \odot \bar{x}(t-1) - \hat{\nabla} f(\bar{x}(t-1)), \\
    z(t) &\overset{\eqref{eq:tracking_property}}{=} \frac{1}{n} \mathbb{1} \mathbb{1}^T h(t) \overset{\eqref{eq:def_gh}}{=} \hat{\nabla}^2 f(\bar{x}(t-1)).
\end{aligned}
\end{equation*}
This verifies the assumption that there exists $\xi^\star$ such that $\xi^\star(\bar{x})=\varphi \left(\xi^\star(\bar{x}), \bar{x} \right)$ for any choice of $\bar{x}$, and ensures that $\Psi\left(\xi^{*}(\mathbb{0}), \mathbb{0}\right)=\mathbb{0}$. Noticing that
\begin{equation*}
\phi \left(\bar{x}, \xi^\star(\bar{x}) \right) = - \hat{\nabla} f(\bar{x}) \oslash \hat{\nabla}^2 f(\bar{x})
\end{equation*}
and recalling Lemma \ref{lemma:implicit_function}, we have $\phi \left(\Gamma(\mu), \xi^\star(\Gamma(\mu)) \right) = \mathbb{0}$. To satisfy assumption \textit{(iii)} of Theorem \ref{theorem:sep_time_scales} we assume without loss of generality that the target solution $\Gamma(\mu)$, which is arbitrarily close to the global minimum of $f(x)$, coincides with the origin. This corresponds to solving an equivalent problem in which the objective function is translated by an offset.

Finally, it is sufficient to choose as Lyapunov function $V(\bar{x})= \left\| \hat{\nabla}f(\bar{x}) \right\|^2$, which is always strictly positive except for $V(\mathbb{0}) = 0$. Indeed, it is possible to deduce the following bounds for $V(\bar{x})$, whose proofs are contained in Appendix \ref{appendix:estimators}.
\begin{equation*}
\begin{aligned}
V(\bar{x}) & \leq \left( L_1 + \frac{\mu \sqrt{d} L_2}{2} \right)^2 \left\| \bar{x} -\Gamma(\mu) \right\|^2, \\
V(\bar{x}) & \geq \left( m^2 - 2 L_1 \frac{\mu^2 L_3}{6} -\left( \frac{\mu^2 L_3}{6} \right)^2 \right) \left\| \bar{x} - \Gamma(\mu) \right\|^2, \\
\left\| \frac{\partial V}{\partial \bar{x}} \right\|
& \leq \left(2 L_1 + \frac{\mu L_3 d}{3} \right) \left( L_1 + \frac{\mu \sqrt{d} L_2}{2} \right) \left\| \bar{x} -\Gamma(\mu) \right\|, \\
\frac{\partial V}{\partial \bar{x}} \phi \left(\bar{x}, \xi^\star(\bar{x}) \right)
& \leq \left( \frac{2 d \mu^2 L_3}{2m - L_3 \mu^2} -\frac{12m}{12 L_1 + L_3 \mu^2}  \right) \left( L_1 + \frac{\mu \sqrt{d} L_2}{2} \right)^2 \left\| \bar{x} -\Gamma(\mu) \right\|^2.
\end{aligned}
\end{equation*}
To ensure that the lower bound on $V(\bar{x})$ is not trivial and that we are moving along a descent direction, when $L_3 \neq 0$ we require that $\mu \leq \min \left\{ \mu_1, \mu_2 \right\}$, where
\begin{equation*}
\mu_1 = \sqrt{ \frac{6 \left( \sqrt{L_1^2 + m^2} - L_1 \right) }{d L_3} },
\qquad
\mu_2 = \sqrt{ 3 \frac{ \sqrt{4d^2 L_1^2 + m^2 + 4dL_1 m + 8 m^2 d} - 2d L_1 - m }{d L_3} }.
\end{equation*}
Since all the required conditions are verified, the proof is complete.

\end{proof}

\begin{remark}
We recall that while the algorithm converges to $\Gamma(\mu)$, in virtue of Lemma \ref{lemma:implicit_function} it is possible to make such point arbitrarily close to the actual global minimum of $f(x)$ by reducing $\mu$. Therefore, the quantization parameter $\mu$ in the derivative estimators determines the accuracy of the solution reached by our algorithm.
\end{remark}

\begin{remark}
Since the separation of time-scales is mainly an existential theorem, it is difficult to provide a bound on the parameter $\bar{\epsilon}$ or an explicit convergence rate. Even if this was possible, very conservative values of little practical use would be obtained. This is common to most works, where usually in the numerical simulations the hyperparameters are tuned by trial-and-error rather than according to possibly available theoretical bounds.
\end{remark}

%\begin{remark}
%Since the separation of time-scales is mainly an existential theorem, it is difficult to provide a bound on the parameter $\bar{\epsilon}$. However, we argue that this is not a relevant issue, because such bound would be very conservative and useless from an application point of view. This is common to most algorithms, where in practice you need to set aside theoretical guarantees and tailor the parameters to the specific use case to achieve reasonably good performance.
%\end{remark}

\section{Numerical results}

\begin{figure}[t!]
  \centering
 
     \begin{subfigure}[b]{\columnwidth}
         \centering
           \parbox{0.85\columnwidth}{
          \includegraphics[width=0.90\columnwidth]{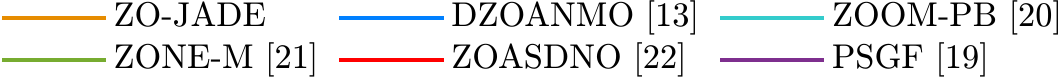}
          }
     \end{subfigure}
     
     \vspace{0.2cm}
     \hfill
     \begin{subfigure}[b]{\columnwidth}
         \centering
         \parbox{0.9\columnwidth}{
              \caption{Ridge regression on the dataset \cite{Coraddu2013Machine}.}
              \includegraphics[width=0.95\columnwidth]{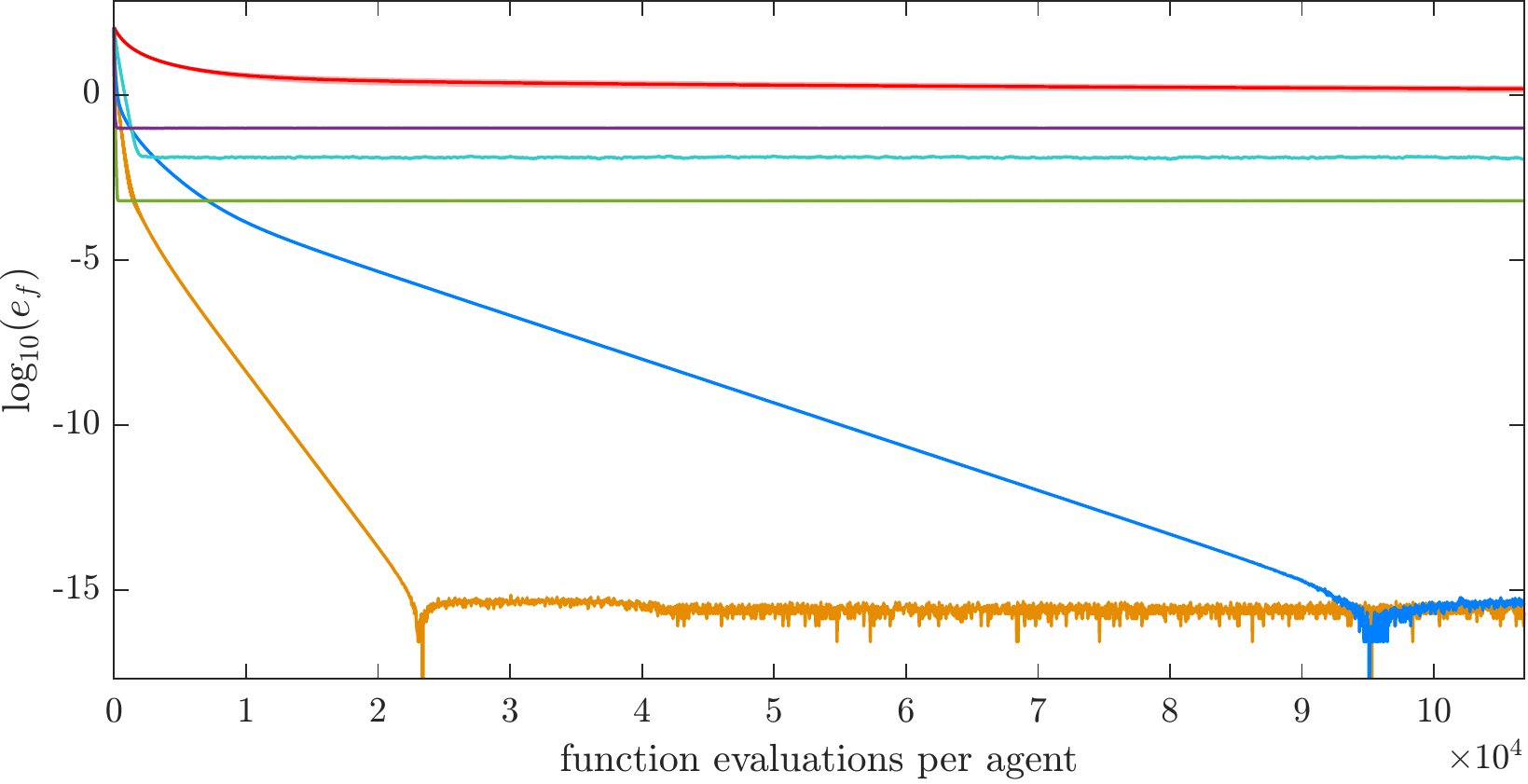}
         }  
         \label{fig:ridge_reg}
     \end{subfigure}

    \vspace{0.2cm}
     \hfill
     \begin{subfigure}[b]{\columnwidth}
          \centering
          \parbox{0.9\columnwidth}{
              \caption{Classification via logistic regression on the dataset MNIST.}
              \includegraphics[width=0.95\columnwidth]{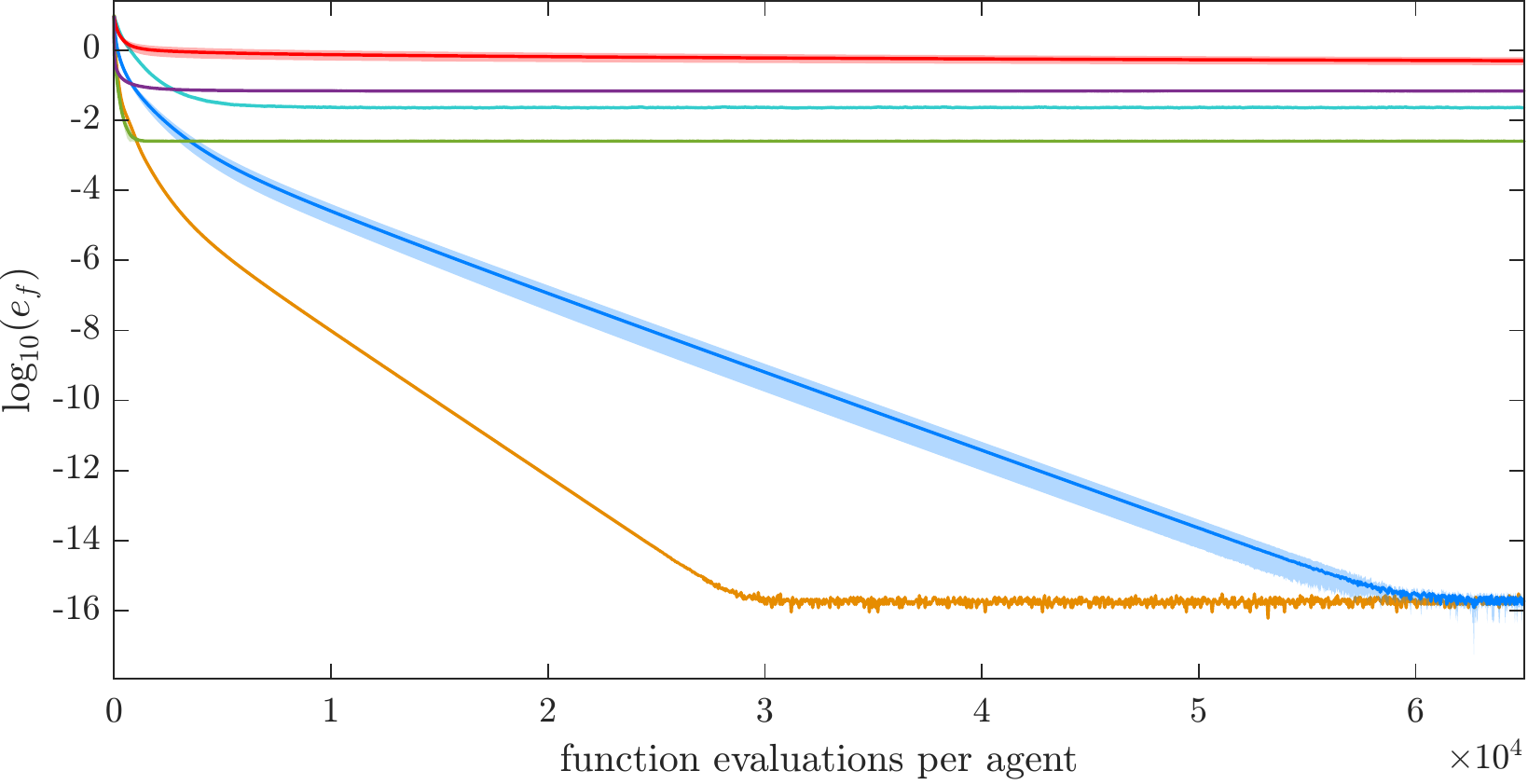}
          }  
          \label{fig:logreg_MNIST}
     \end{subfigure}
     \caption{Evolution of the loss function versus number of function queries. The plots show the average performance over multiple runs starting from different points, where the solid lines are the mean trajectories and the shaded areas represent the standard deviation.}
  
\end{figure}

Let us present some numerical results, which validate the effectiveness of ZO-JADE and show its superiority with respect to several state-of-the-art ZO algorithms for multi-agent optimization. In particular, we compare our method with the following distributed ZO algorithms: PSGF \cite{yuan2015gradient}, which employs a push-sum strategy; the accelerated version of ZOOM, ZOOM-PB \cite{zhang2022zeroth}, which requires the Laplacian matrix of the network and uses a powerball function; ZONE-M \cite{hajinezhad2017zeroth}, a primal-only version of the Method of Multipliers; the primal-dual method proposed in \cite{yi2022zeroth}, dubbed here as ZOASDNO using the capital letters in the title of the paper, based on the Laplacian framework; the algorithm in \cite{tang2020distributed} using a $2d$-points gradient estimator and gradient tracking, denoted here with the acronym DZOANMO. For all these algorithms, we tune the possible parameters as suggested in the respective papers. For ZOOM-PB and ZONE-M, whose randomized gradient estimators consider only a subset of coordinates, we test different numbers of search directions and show only the best performances obtained. We average over multiple runs, starting from different randomly generated initial points $x_i(0)$, which are the same for all the algorithms.

We consider a network of $n=20$ agents satisfying Assumption \ref{assumpt:graph}, where the consensus matrix $P$ is built using the Metropolis-Hastings weights, and we test the algorithms on two types of cost functions. In the first scenario we consider quadratic objectives, and specifically we perform ridge regression on the condition-based maintenance dataset \cite{Coraddu2013Machine} to predict the health state of a machine. In the second test we consider more general convex functions, addressing a binary classification problem via logistic regression. In particular we run one-vs-all classification, in which one target class needs to be distinguished from all the others, on the well-known MNIST dataset \cite{deng2012mnist}. Adopting the approach followed by \cite{fabbro2022newton} on a similar dataset, we apply PCA to reduce the size of the problem to $d=20$ and provide each agent with a balanced set $\mathcal{D}_i= \{ s_1,\dots, s_{|\mathcal{D}_i|} \}$ where half of the samples belong the target class, and the other half to the remaining classes, equally mixed. The objective functions are the regularized log-loss cost functions
\[
f_i(x, \mathcal{D}_i) = \frac{1}{\mathcal{D}_i} \sum_{k=1}^{|\mathcal{D}_i|} \log\left( 1 + \exp\left( - l_k [s_k^T \ 1] x \right) \right) + \frac{w}{2} \left\| x \right\|^2,
\]
where $l_k \in \{-1,1\}$ is the label corresponding to the predictor sample $s_k \in \mathbb{R}^{d-1}$ and the quadratic term with $w > 0$ ensures strong convexity.
\\
To take into account the possible differences between the local solutions $x_i$, we evaluate the performance of the algorithms using the loss function
\begin{equation}
e_f = \frac{ \frac{1}{n} \sum_{i=1}^n f(x_i) - f(x^\star) } {|f(x^\star)|},
\end{equation}
where $x^\star$ is the unique minimizer of $f$. The speed of convergence is measured in terms of number of function evaluations, which in the zeroth-order optimization field are assumed to be the most expensive computations.

The numerical simulations clearly show that ZO-JADE outperforms the other algorithms in the considered scenarios, as it achieved an equivalent or lower loss value using a smaller amount of function evaluations. This suggests that our method is suitable both in the case where a very precise solution is desired and in the case where a limited amount of function queries are available.

\section{Conclusion}
We presented a novel zeroth-order algorithm for distributed convex optimization, which is the first of its kind to exploit the second order derivative. We provided an intuitive explanation of our method and we formally proved its semi-global stability and exponentially fast convergence. The theoretical results are validated by numerical simulations on publicly available real-world datasets, in which ZO-JADE is shown to outperform several zeroth-order distributed algorithms present in the literature. We foresee interesting possibilities for future work, such as extensions to online and stochastic cases and the use of randomized derivative estimators.

%%%%%%%%%%%%%%%%%%%%%%%%%%%%%%%%%%%%%%%%%%%%%%%%%%%%%%%%%%%%%%%
%%%%%%%%%%%%%%%%%%%%%%%%%%%%%%%%%%%%%%%%%%%%%%%%%%%%%%%%%%%%%%%

\appendix

\section{Separation of the time-scales}
\label{appendix:separation_time_scales}
In this appendix we use Lyapunov theory to analyze the stability of a particular class of discrete-time systems. In particular we provide a proof of Theorem \ref{theorem:sep_time_scales}, which extends Proposition 8.1 in \cite{bof2018lyapunov} to include the case in which also the fast dynamics depends on the step-size parameter $\epsilon$, allowing the application of the time-scales separation principle to a broader class of problems. We use the notation and terminology introduced by \cite{bof2018lyapunov} and we refer to results contained therein. For an equivalent treatment of the continuous-time case we refer the reader to the chapter `Singular Perturbations' in \cite{khalil2015nonlinear}.

\begin{proof}
Assume that all the assumptions in the statement of Theorem \ref{theorem:sep_time_scales} are verified. Let $y^{\prime}(k)$ be defined as

\begin{equation*}
y^{\prime}(k) \coloneqq y(k)-y^{*}(x(k))
\end{equation*}

We can write that
\begin{equation*}
\begin{aligned}
y^{\prime}(k+1) & =y(k+1)-y^{*}(x(k+1)) \\
& =\varphi(y(k), x(k)) + \epsilon \Psi(y(k), x(k)) -y^{*}(x(k+1)) \\
& =\varphi\left(y^{\prime}(k)+y^{*}(x(k)), x(k)\right) + \epsilon \Psi(y^{\prime}(k)+y^{*}(x(k), x(k)) \\
& \quad -y^{*}\left(x(k)+\epsilon \phi\left(x(k), y^{\prime}(k)+y^{*}(x(k))\right)\right)
\end{aligned}
\end{equation*}

Then the dynamics of the original system can be written in this new coordinate system as:
\begin{equation*}
\left\{
\begin{aligned}
x(k+1) &=x(k)+\epsilon \phi\left(x(k), y^{\prime}(k)+y^{*}(x(k))\right) \\
y^{\prime}(k+1) &=\varphi\left(y^{\prime}(k)+y^{*}(x(k)), x(k)\right) + \epsilon \Psi(y(k), x(k)) -y^{*}(x(k+1)) \\
& \quad -y^{*}\left(x(k)+\epsilon \phi\left(x(k), y^{\prime}(k)+y^{*}(x(k))\right)\right)
\end{aligned}\right.
\end{equation*}

By assumption \textit{(ii)}, then there exists a Lyapunov function $W(y, x)$ such that
\begin{equation*}
\begin{gathered}
b_{1}\left\|y^{\prime}\right\|^{2} \leq W\left(y^{\prime}, x\right) \leq b_{2}\left\|y^{\prime}\right\|^{2} \\
W\left(\varphi\left(y^{\prime}+y^{*}(x), x\right)-y^{*}(x), x\right)-W\left(y^{\prime}, x\right) \leq-b_{3}\left\|y^{\prime}\right\|^{2} \\
\left|W\left(y_{1}^{\prime}, x\right)-W\left(y_{2}^{\prime}, x\right)\right| \leq b_{4}\left\|y_{1}^{\prime}-y_{2}^{\prime}\right\|\left(\left\|y_{1}^{\prime}\right\|+\left\|y_{2}^{\prime}\right\|\right) \\
\left|W\left(y^{\prime}, x_{1}\right)-W\left(y^{\prime}, x_{2}\right)\right| \leq b_{5}\left\|y^{\prime}\right\|^{2}\left\| x_{1}-x_{2} \right\|
\end{gathered}
\end{equation*}

By assumption \textit{(iv)}, Proposition $7.3$ in \cite{bof2018lyapunov} guarantees the existence of a Lyapunov function $V^{\prime}(x)$ and constant $\epsilon_{c} \in(0,1]$ such that
\begin{equation*}
\begin{gathered}
a_{1}\|x\|^{2} \leq V^{\prime}(x) \leq a_{2}\|x\|^{2} \\
V^{\prime}\left(x+\epsilon \phi \left( x, y^{*}(x)\right)  \right) -V^{\prime}(x) \leq-\epsilon a_{3}\|x\|^{2}, \quad \epsilon \in\left(0, \epsilon_{c}\right) \\
\left|V^{\prime}(x)-V^{\prime}\left(x^{\prime}\right)\right| \leq a_{4}\left\|x-x^{\prime}\right\|\left(\|x\|+\left\|x^{\prime}\right\|\right)
\end{gathered}
\end{equation*}

Let us define the extended state vector $z=\left[x^{T} \ y^{\prime T}\right]^{T}$ and consider now the global Lyapunov function:
\begin{equation*}
U\left(x, y^{\prime}\right)=V^{\prime}(x)+W\left(y^{\prime}, x\right)
\end{equation*}

which has the property
\begin{equation*}
\min \left\{a_{1}, b_{1}\right\}\left(\|x\|^{2}+\left\|y^{\prime}\right\|^{2}\right) \leq U\left(x, y^{\prime}\right) \leq \max \left\{a_{2}, b_{2}\right\}\left(\|x\|^{2}+\left\|y^{\prime}\right\|^{2}\right)
\end{equation*}

Now, defining $\alpha \coloneqq \min \left\{a_{1}, b_{1}\right\}$ and $\beta \coloneqq \max \left\{a_{2}, b_{2}\right\}$ let us consider the sets
\begin{equation*}
\begin{aligned}
\mathcal{B}_{r} & \coloneqq \left\{(x, y) \mid\|x\|^{2}+\left\|y^{\prime}\right\|^{2} \leq r\right\} \\
\Omega_{r}(k) & \coloneqq \{(x, y) \mid U(x, y) \leq r \beta\}
\end{aligned}
\end{equation*}

\begin{equation*}
\begin{aligned}
\mathcal{B}_{r_{0}} \coloneqq \left\{(x, y) \mid\|x\|^{2}+\left\|y^{\prime}\right\|^{2} \leq r \frac{\beta}{\alpha}\right\}=\left\{(x, y)\|\| x\left\|^{2}+\right\| y^{\prime} \|^{2} \leq r_{0}\right\}, r_{0} \coloneqq r \frac{\beta}{\alpha}
\end{aligned}
\end{equation*}

which are closed and compact by continuity. Also note that
\begin{equation*}
\left(x_{0}, y_{0}-y^{*}(0)\right)=\left(x_{0}, y_{0}^{\prime}\right) \in \mathcal{B}_{r} \subset \Omega_{r}(k) \subset \mathcal{B}_{r_{0}}, \forall k \geq 0
\end{equation*}

where we used the fact that $y_{0}^{\prime}=y_{0}-y^{*}(0)$. The following basic bounds follow immediately from the Lipschitz and vanishing properties of the various functions when the state is restricted to belong to the compact set $\left(x, y^{\prime}\right) \in \mathcal{B}_{r_{0}}$.

\begin{equation*}
\begin{aligned}
\left\|\phi\left(x, y^{*}(x)\right)\right\| & \leq \ell_{1}\|x\| \\
\left\|\phi\left(x, y^{\prime}+y^{*}(x)\right)-\phi\left(x, y^{*}(x)\right)\right\| & \leq \ell_{2}\left\|y^{\prime}\right\| \\
\left\|\varphi\left(y^{\prime}+y^{*}(x), x\right)-y^{*}(x)\right\| & =\left\|\varphi\left(y^{\prime}+y^{*}(x), x\right)-\varphi\left(y^{*}(x), x\right)\right\| \\
& \leq \ell_{3}\left\|y^{\prime}\right\| \\
\left\|y^{*}\left(x+\epsilon \phi\left(x, y^{\prime}+y^{*}(x)\right)\right)-y^{*}(x)\right\| & \leq \epsilon \ell_{4}\left\|\phi\left(x, y^{\prime}+y^{*}(x)\right)\right\| \\
& \leq \epsilon \ell_{4}\left(\ell_{1}\|x\|+\ell_{2}\left\|y^{\prime}\right\|\right) \\
\left\|\Psi\left(x, y^{\prime}+y^{*}(x)\right)\right\| &\leq \left\|\Psi\left(x, y^{\prime}+y^{*}(x)\right) - \Psi\left(x, y^{*}(x)\right) \right\| + \left\| \Psi\left(x, y^{*}(x)\right) \right\| \\
& \leq \ell_{5}\|y^{\prime}\| + \ell_{6} \|x\| 
\end{aligned}
\end{equation*}

which also imply
\begin{equation*}
\begin{aligned}
\left\|\phi\left(x, y^{\prime}+y^{*}(x)\right)\right\| & \leq \ell_{2}\left\|y^{\prime}\right\|+\ell_{1}\|x\| \\
\left\|x+\epsilon \phi\left(x, y^{\prime}+y^{*}(x)\right)\right\| & \leq\|x\|+\epsilon\left(\ell_{1}\|x\|+\ell_{2}\left\|y^{\prime}\right\|\right) \\
\left\|\varphi\left(y^{\prime}+y^{*}(x), x\right) + \epsilon \Psi\left(x, y^{\prime}+y^{*}(x) \right) -y^{*}(x)\right\|
& \leq \left\|\varphi\left(y^{\prime}+y^{*}(x), x\right)-y^{*}(x)\right\| + \epsilon \left\|\Psi\left(x, y^{\prime}+y^{*}(x)\right)\right\| \\
& \leq (\ell_{3} + \epsilon \ell_{5})\left\|y^{\prime}\right\| + \epsilon \ell_{6} \|x\|
\end{aligned}
\end{equation*}

To simplify the notation let us indicate $x=x(k)$, $x^{+}=x(k+1)$ and $ \chi=x+\epsilon \phi\left(x, y^{*}(x)\right)$. We first want to find an upper bound for the Lyapunov function relative to the slow dynamics:
\begin{equation*}
\begin{aligned}
\Delta V\left(y^{\prime}, x\right) &  \coloneqq V^{\prime}(x(k+1))-V^{\prime}(x(k)) \\
& =V^{\prime}\left(x^{+}\right)-V^{\prime}( \chi)+V^{\prime}(\chi)-V^{\prime}(x) \\
& \leq a_{4}\left\|x^{+}-\chi\right\|\left(\left\|x^{+}\right\|+\|\chi\|\right)-\epsilon a_{3}\|x\|^{2} \\
& \leq \epsilon a_{4} \ell_{2}\left\|y^{\prime}\right\|\left(2\left(1+\epsilon \ell_{1}\right)\|x\|+\epsilon \ell_{2}\left\|y^{\prime}\right\|\right)-\epsilon a_{3}\|x\|^{2} \\
& \leq \epsilon^{2} a_{4} \ell_{2}^{2}\left\|y^{\prime}\right\|^{2}+\epsilon 2 a_{4} \ell_{2}\left(1+\epsilon \ell_{1}\right)\|x\|\left\|y^{\prime}\right\|-\epsilon a_{3}\|x\|^{2} \\
& \leq A_{V}\|x\|^{2}+B_{V}\|x\|\|y\|+C_{V}\left\|y^{\prime}\right\|^{2}
\end{aligned}
\end{equation*}

with $A_{V}=-\epsilon a_{3}, B_{V}=\epsilon 2 a_{4} \ell_{2}\left(1+\epsilon \ell_{1}\right)$ and $C_{V}=\epsilon^{2} a_{4} \ell_{2}^{2}$, which, for suitable positive constants $v_{A_{1}}, v_{B_{1}}, v_{B_{2}}, v_{C_{1}}$, can be rewritten as $A_{V}=-\epsilon v_{A_{1}}, B_{V}=$ $\epsilon v_{B_{1}}+\epsilon^{2} v_{B_{2}}$ and $C_{V}=\epsilon^{2} v_{C_{1}}$.

To simplify the notation let us indicate $y^{\prime}=y^{\prime}(k), y^{\prime+}=y(k+1)$ and

\begin{equation*}
\begin{aligned}
\Delta W\left(y^{\prime}, x\right) & \coloneqq W\left( y^{\prime}(k+1), x(k+1)\right)-W\left(y^{\prime}(k), x(k)\right) \\
& \begin{rcases*}
=W\left( \varphi\left(y^{\prime}+y^{*}(x), x\right) + \epsilon \Psi\left(x, y^{\prime}+y^{*}(x) \right) -y^{*}\left(x^{+}\right), x^{+}\right) \\
\quad - W\left( \varphi\left(y^{\prime}+y^{*}(x), x\right) + \epsilon \Psi\left(x, y^{\prime}+y^{*}(x) \right) -y^{*}(x), x^{+}\right)
\end{rcases*} \Delta W_{1} \\
& \begin{rcases*}
\quad + W\left( \varphi\left(y^{\prime}+y^{*}(x), x\right) + \epsilon \Psi\left(x, y^{\prime}+y^{*}(x) \right) -y^{*}(x), x^{+}\right) \\
\quad -W\left( \varphi\left(y^{\prime}+y^{*}(x), x\right) + \epsilon \Psi\left(x, y^{\prime}+y^{*}(x) \right) -y^{*}(x), x\right)
\end{rcases*} \Delta W_{2} \\
& \begin{rcases*}
\quad + W\left( \varphi\left(y^{\prime}+y^{*}(x), x\right) + \epsilon \Psi\left(x, y^{\prime}+y^{*}(x) \right) -y^{*}(x), x\right) \\
\quad - W\left( \varphi\left(y^{\prime}+y^{*}(x), x\right) + \epsilon \Psi\left(x, y^{*}(x) \right) -y^{*}(x), x\right)
\end{rcases*} \Delta W_{3} \\
& \begin{rcases*}
\quad + W\left( \varphi\left(y^{\prime}+y^{*}(x), x\right) + \epsilon \Psi\left(x, y^{*}(x) \right) -y^{*}(x), x\right) \\
\quad - W\left(y^{\prime}, x\right)
\end{rcases*} \Delta W_{4}
\end{aligned}
\end{equation*}

Using the properties of the Lyapunov function $W$ and the several bounds obtained before we get:

\begin{equation*}
\begin{aligned}
\Delta W_{1} & \leq b_{4}\left\|y^{*}\left(x^{+}\right)-y^{*}(x)\right\|\left(\left\|\varphi\left(y^{\prime}+y^{*}(x), x\right) + \epsilon \Psi\left(x, y^{\prime}+y^{*}(x) \right) -y^{*}\left(x^{+}\right)\right\| \right. \\
& \quad \left. +\left\|\varphi\left(y^{\prime}+y^{*}(x), x\right) + \epsilon \Psi\left(x, y^{\prime}+y^{*}(x) \right) -y^{*}(x)\right\| \right) \\
& \leq b_{4} \epsilon \ell_{4}\left(\ell_{1}\|x\|+\ell_{2}\left\|y^{\prime}\right\|\right)\left(2\left\|\varphi\left(y^{\prime}+y^{*}(x), x\right) + \epsilon \Psi\left(x, y^{\prime}+y^{*}(x) \right) -y^{*}(x)\right\|+\left\|y^{*}\left(x^{+}\right)-y^{*}(x)\right\|\right) \\
& \leq b_{4} \epsilon \ell_{4}\left(\ell_{1}\|x\|+\ell_{2}\left\|y^{\prime}\right\|\right)\left(2 (\ell_{3} + \epsilon \ell_{5}) \left\|y^{\prime}\right\| + 2 \epsilon \ell_{6} \left\|x\right\| +\epsilon \ell_{4}\left(\ell_{1}\|x\|+\ell_{2}\left\|y^{\prime}\right\|\right)\right) \\
& \leq \epsilon b_{4} \ell_{4}\left[ \epsilon \ell_{1} \left(\ell_{1} \ell_{4} + 2\ell_{6} \right) \|x\|^{2}+\left(2 \ell_{1} \ell_{3} + 2 \epsilon \ell_{1} \ell_{5} +2 \epsilon \ell_{1} \ell_{2} \ell_{4} + 2 \epsilon \ell_{2} \ell_{6} \right)\left\|y^{\prime}\right\|\|x\|+\ell_{2}\left(2 \ell_{3} + 2 \epsilon \ell_{5}+\epsilon \ell_{2} \ell_{4}\right)\left\|y^{\prime}\right\|^{2}\right] \\
& \leq A_{W_{1}}\|x\|^{2}+B_{W_{1}}\|x\|\left\|y^{\prime}\right\|+C_{W_{1}}\left\|y^{\prime}\right\|^{2}
\end{aligned}
\end{equation*}

with $A_{W_{1}} \coloneqq \epsilon^{2} b_{4}  \ell_{1} \ell_{4} \left( \ell_{1} \ell_{4} + 2 \ell_{6} \right), B_{W_{1}} \coloneqq \epsilon b_{4} \ell_{4}\left(2 \ell_{1} \ell_{3} + 2 \epsilon \ell_{1} \ell_{5} +2 \epsilon \ell_{1} \ell_{2} \ell_{4}  + 2 \epsilon \ell_{2} \ell_{6} \right)$ and $C_{W_{1}} \coloneqq \epsilon b_{4} \ell_{2} \ell_{4} \left(2 \ell_{3} + 2 \epsilon \ell_{5} + \epsilon \ell_{2} \ell_{4}\right)$

\begin{equation*}
\begin{aligned}
\Delta W_{2} & \leq b_{5}\left\|\varphi\left(y^{\prime}+y^{*}(x), x\right) + \epsilon \Psi\left(x, y^{\prime}+y^{*}(x) \right) -y^{*}(x)\right\|^{2}\left\|x^{+}-x\right\| \\
& \leq b_{5} \left[ (\ell_{3} + \epsilon \ell_{5}) \left\|y^{\prime}\right\| + \epsilon \ell_{6} \left\|x\right\| \right]^2 \epsilon\left(\ell_{1}\|x\|+\ell_{2}\left\|y^{\prime}\right\|\right)
\end{aligned}
\end{equation*}

Now, considering that $\|x\|$ and $\|y\|$ are smaller than a $\bar{r}$ and that $y^{*}$ is a twice differentiable function of $x$, we have that $\left\|y^{*}\right\|$ is bounded itself by a $\tilde{r}$, and so

\begin{equation*}
\left\|y^{\prime}\right\| = \left\|y-y^{\star}(x)\right\| \leq \left\|y\right\| + \left\|y^{\star}(x)\right\| \leq \bar{r}+\tilde{r}
\end{equation*}

\begin{equation*}
\begin{aligned}
\Delta W_{2} & \leq b_{5} \left[ (\ell_{3} + \epsilon \ell_{5})^{2} \left\|y^{\prime}\right\|^2 + \epsilon^2 \ell_{6}^2 \left\|x\right\|^2 + 2\epsilon \ell_{6} (\ell_{3} + \epsilon \ell_{5}) \left\|y^{\prime}\right\| \left\|x\right\| \right] \epsilon\left[ \ell_{1} \bar{r} +\ell_{2} (\bar{r}+\tilde{r}) \right]
\\
&= A_{W_{2}}\|x\|^2 + B_{W_{2}}\|x\|\left\|y^{\prime}\right\| + C_{W_{2}}\left\|y^{\prime}\right\|^{2},
\end{aligned}
\end{equation*}

with $A_{W_{2}} \coloneqq \epsilon^3 b_5 \ell_{6}^2 \left[ \ell_{1} \bar{r} +\ell_{2} (\bar{r}+\tilde{r}) \right]$, 
$B_{W_{2}} \coloneqq \epsilon^2 b_{5} 2 \ell_{6} (\ell_{3} + \epsilon \ell_{5}) \left[ \ell_{1} \bar{r} +\ell_{2} (\bar{r}+\tilde{r}) \right]$ and $C_{W_{2}} \coloneqq \epsilon b_{5} (\ell_{3} + \epsilon \ell_{5})^{2} \left[ \ell_{1} \bar{r} +\ell_{2} (\bar{r}+\tilde{r}) \right]$. Using also Assumption \textit{(iv)}:
\begin{equation*}
\begin{aligned}
\Delta W_{3} & \leq b_{4} \epsilon \left\| \Psi\left(x, y^{\prime}+y^{*}(x) \right) - \Psi\left(x, y^{*}(x) \right) \right\|\left(\left\|\varphi\left(y^{\prime}+y^{*}(x), x\right) + \epsilon \Psi\left(x, y^{\prime}+y^{*}(x) \right) -y^{*}\left(x\right)\right\| \right. \\
& \quad \left. +\left\|\varphi\left(y^{\prime}+y^{*}(x), x\right) + \epsilon \Psi\left(x, y^{*}(x) \right) -y^{*}(x)\right\| \right) \\
&= b_{4} \epsilon \left\| \Psi\left(x, y^{\prime}+y^{*}(x) \right) \right\|\left(\left\|\varphi\left(y^{\prime}+y^{*}(x), x\right) + \epsilon \Psi\left(x, y^{\prime}+y^{*}(x) \right) -y^{*}\left(x\right)\right\| +\left\|\varphi\left(y^{\prime}+y^{*}(x), x\right) -y^{*}(x)\right\| \right) \\
& \leq \epsilon b_{4} \left( \ell_{5} \left\|y^{\prime}\right\| + \ell_{6} \left\|x\right\| \right) \left[ (\ell_{3} + \epsilon \ell_{5}) \left\|y^{\prime}\right\| + \epsilon \ell_{6} \left\|x\right\| + \ell_{3} \left\|y^{\prime}\right\| \right] \\
&= A_{W_{3}}\|x\|^2 + B_{W_{3}}\|x\|\left\|y^{\prime}\right\| + C_{W_{3}}\left\|y^{\prime}\right\|^{2}
\end{aligned}
\end{equation*}

with $A_{W_{3}} \coloneqq \epsilon^2 b_4 \ell_{6}^2$, 
$B_{W_{3}} \coloneqq \epsilon 2 b_4 \ell_{6} (\ell_{3} + \epsilon \ell_{5}) $ and $C_{W_{3}} \coloneqq \epsilon b_4 \ell_{5} (2\ell_{3} + \epsilon \ell_{5})$.

\begin{equation*}
\begin{aligned}
\Delta W_{4} &= W\left(\varphi\left(y^{\prime}+y^{*}(x), x\right)-y^{*}(x), x\right)-W\left(y^{\prime}, x\right) \\
& \leq-b_{3}\left\|y^{\prime}\right\|  = -C_{W_{4}}\left\|y^{\prime}\right\|^{2},
\end{aligned}
\end{equation*}

with $C_{W_{4}} \coloneqq b_{3}$. Summing up all the previous inequalities, we have

\begin{equation*}
\Delta W\left(y^{\prime}, x\right) \leq A_{W}\|x\|^{2}+B_{W}\|x\|\left\|y^{\prime}\right\|+C_{W}\left\|y^{\prime}\right\|^{2},
\end{equation*}

with

$A_{W}=A_{W_{1}} + A_{W_2}+ A_{W_3}=
\epsilon^{2} b_{4}  \ell_{1} \ell_{4} \left( \ell_{1} \ell_{4} + 2 \ell_{6} \right) + \epsilon^3 b_5 \ell_{6}^2 \left[ \ell_{1} \bar{r} +\ell_{2} (\bar{r}+\tilde{r}) \right] + \epsilon^2 b_4 \ell_{6}^2 = \epsilon^{2} w_{A_{1}} + \epsilon^{3} w_{A_{2}}$

$B_{W}=B_{W_{1}}+B_{W_{2}} + B_{W_3}=
\epsilon b_{4} \ell_{4}\left(2 \ell_{1} \ell_{3} + 2 \epsilon \ell_{1} \ell_{5} +2 \epsilon \ell_{1} \ell_{2} \ell_{4}  + 2 \epsilon \ell_{2} \ell_{6} \right) + \epsilon^2 b_{5} 2 \ell_{6} (\ell_{3} + \epsilon \ell_{5}) \left[ \ell_{1} \bar{r} +\ell_{2} (\bar{r}+\tilde{r}) \right] + \epsilon 2 b_4 \ell_{6} (\ell_{3} + \epsilon \ell_{5})
= \epsilon w_{B_{1}}+\epsilon^2 w_{B_{2}}+\epsilon^3 w_{B_{3}}$

$C_{W}=C_{W_{1}}+C_{W_{2}}+C_{W_{3}}-C_{W_{4}}=
\epsilon b_{4} \ell_{2}^{2}\left(2 \ell_{3} + 2 \epsilon \ell_{5} + \epsilon \ell_{2} \ell_{4}\right) +
\epsilon b_{5} (\ell_{3} + \epsilon \ell_{5})^{2} \left[ \ell_{1} \bar{r} +\ell_{2} (\bar{r}+\tilde{r}) \right]
+ \epsilon b_{4} \ell_{5} (2 \ell_{3} + \epsilon \ell_{5}) - b_{3}=-w_{C_{1}}+\epsilon w_{C_{2}}+\epsilon^{2} w_{C_{3}}$,

for suitable positive constants $w_{A_{1}}, w_{A_2}, w_{B_{1}}, w_{B_{2}}, w_{B_3}, w_{C_{1}}, w_{C_{2}}, w_{C_{3}}$.

Now it is possible to evaluate $\Delta U\left(x, y^{\prime}\right)=U\left(x^{+}, y^{\prime+}\right)-U\left(x, y^{\prime}\right)$. It holds

\begin{equation*}
\Delta U\left(x, y^{\prime}\right)=\Delta W\left(y^{\prime}, x\right)+\Delta V\left(y^{\prime}, x\right),
\end{equation*}

and the following bound easily follows

\begin{equation*}
\begin{aligned}
\Delta U\left(x, y^{\prime}\right) & \leq A_{V}\|x\|^{2}+B_{V}\|x\|\|y\|+C_{V}\left\|y^{\prime}\right\|^{2}+A_{W}\|x\|^{2}+B_{W}\|x\|\left\|y^{\prime}\right\|+C_{W}\left\|y^{\prime}\right\|^{2} \\
& \leq\left(A_{V}+A_{W}\right)\|x\|^{2}+\left(B_{V}+B_{W}\right)\|x\|\|y\|+\left(C_{V}+C_{W}\right)\|y\|^{2} .
\end{aligned}
\end{equation*}

This quadratic form can be rewritten as

\begin{equation*}
\Delta U\left(x, y^{\prime}\right) \leq\left[\begin{array}{ll}
\|x\| & \left\|y^{\prime}\right\|
\end{array}\right] \underbrace{\left[\begin{array}{cc}
A_{V}+A_{W} & \frac{1}{2}\left(B_{V}+B_{W}\right) \\
\frac{1}{2}\left(B_{V}+B_{W}\right) & C_{V}+C_{W}
\end{array}\right]}_{Q_{U}}\left[\begin{array}{l}
\|x\| \\
\left\|y^{\prime}\right\|
\end{array}\right] .
\end{equation*}

Now the aim is to show that $\Delta U\left(x, y^{\prime}\right)$ is always a negative quantity for a small enough choice of the parameter $\epsilon$. It is therefore necessary to study whether matrix $Q_{U}$ is negative definite:

\begin{equation*}
Q_{U}=\left[\begin{array}{cc}
-\epsilon v_{A_{1}}+\epsilon^{2} w_{A_{1}}  + \epsilon^{3} w_{A_{2}} & \frac{1}{2}\left(\epsilon v_{B_{1}}+\epsilon^{2} v_{B_{2}}+\epsilon w_{B_{1}}+\epsilon^{2} w_{B_{2}} + \epsilon^3 w_{B_3}\right) \\
\frac{1}{2}\left(\epsilon v_{B_{1}}+\epsilon^{2} v_{B_{2}}+\epsilon w_{B_{1}}+\epsilon^{2} w_{B_{2}} + \epsilon^3 w_{B_3}\right) & \epsilon^{2} v_{C_{1}}-w_{C_{1}}+\epsilon w_{C_{2}}+\epsilon^{2} w_{C_{3}}
\end{array}\right]
\end{equation*}

In order to verify whether $Q_{U}$ is negative definite, it is enough to verify whether the first principal minor is negative and the second one is positive for some choices of $\epsilon$. These quantities, in Landau notation, are respectively

\begin{equation*}
-\epsilon v_{A_{1}}+o(\epsilon), \quad \epsilon v_{A_{1}} w_{C_{1}}+o(\epsilon)
\end{equation*}

and so, there exists $\epsilon_{r}^{\prime}$ such that for $\epsilon<\epsilon_{r}^{\prime}$ matrix $Q_{U}$ is negative definite. As a consequence, there exists a positive constant $\ell_{U}$ such that

\begin{equation*}
Q_{U} \leq-\epsilon \ell_{U} I
\end{equation*}

and so

\begin{equation*}
\begin{aligned}
\Delta U\left(x, y^{\prime}\right) & \leq-\epsilon \ell_{U}\left(\|x\|^{2}+\left\|y^{\prime}\right\|^{2}\right) \\
& \leq-\epsilon \ell_{U}\left(\frac{1}{c_{1}} V(x)+\frac{1}{b_{1}} W\left(y^{\prime}, x\right)\right) \\
& \leq-\epsilon \gamma_{r} U\left(x, y^{\prime}\right)
\end{aligned}
\end{equation*}

with $\gamma_{r} \coloneqq \ell_{U} \max \left\{\frac{1}{c_{1}}, \frac{1}{b_{1}}\right\}$

From the latter inequality it follows, for some $\ell>0$,

\begin{equation*}
\|x(k)\|^{2}+\left\|y^{\prime}(k)\right\|^{2} \leq \ell\left(1-\epsilon \gamma_{r}\right)^{k}\left(\|x(0)\|^{2}+\left\|y^{\prime}(0)\right\|^{2}\right),
\end{equation*}

and since $\|x(0)\|^{2}+\left\|y^{\prime}(0)\right\| \leq r$, there exists a $C_{r}>0$ such that

\begin{equation*}
\|x(k)\|^{2} \leq C_{r}\left(1-\epsilon \gamma_{r}\right)^{k}, \quad \epsilon \in\left(0, \epsilon_{r}\right)
\end{equation*}

where $\epsilon_{r}=\min \left\{\epsilon_{c}, \epsilon_{r}^{\prime}\right\}$

\end{proof}

%%%%%%%%%%%%%%%%%%%%%%%%%%%%%%%%%%%%%%%%%%%%%%%%%%%%%%%%%%%%%%%
%%%%%%%%%%%%%%%%%%%%%%%%%%%%%%%%%%%%%%%%%%%%%%%%%%%%%%%%%%%%%%%

\section{Properties of the derivative estimators}
\label{appendix:estimators}

This appendix contains the derivation of the properties of the two derivative estimators, for a function $f(x): \mathbb{R}^d \to \mathbb{R}$ that satisfies Assumption \ref{assumpt:convexity_smoothness}. For convenience we rewrite here the expression of the estimators:
\begin{equation*}
\begin{aligned}
    \quad \hat{\nabla} f(\mu, x) &:= \sum_{k=1}^d \frac{f(x + \mu e_k)-f(x - \mu e_k)}{2 \mu} e_k
    \\
    \hat{\nabla}^2 f(\mu, x) &:= \sum_{k=1}^d \frac{f(x + \mu e_k)-2f(x)+f(x - \mu e_k)}{\mu^2} e_k
\end{aligned}
\end{equation*}
We also define
\begin{equation*}
\hat{\nabla} f(0,x) := \lim_{\mu \to 0} \hat{\nabla} f(\mu, x) = \nabla f(\mu,x)
\end{equation*}
and similarly for its derivatives. Given two matrices $A,B \in \mathbb{R}^{n \times n}$, we will often use the chains of inequalities
\begin{equation*}
\left[ A \right]_{ij} \leq \max_{i,j} \left[ A \right]_{ij} := \left\| A \right\|_{\text{max}} \leq \left\| A \right\| \leq n \left\| A \right\|_{\text{max}}
\end{equation*}
\begin{equation*}
\sigma_{\text{min}}(A) \left\| B \right\|
\leq \left\| AB \right\|
\leq \left\| A \right\| \left\| B \right\|
\end{equation*}
\begin{equation*}
\sigma_{\text{min}}(AB) \geq \sigma_{\text{min}}(A) \sigma_{\text{min}}(B)
\qquad
\sigma_{\text{max}}(AB) \leq \sigma_{\text{max}}(A) \sigma_{\text{max}}(B)
\end{equation*}
where $\sigma_{\text{min}}(\cdot)$ and $\sigma_{\text{max}}(\cdot)$ are the minimum and maximum singular values of the argument.

\subsection{Gradient estimator}
Using Taylor expansion with integral remainder on a generic element of the gradient estimator
\begin{equation*}
\begin{aligned}
\left[ \hat{\nabla} f(x) \right]_i &= \frac{f(x + \mu e_i) - f(x - \mu e_i)}{2 \mu} \\
&= \frac{1}{2 \mu} \left( f(x) + \nabla f(x)^T \mu e_i + \mu^2 \int_{0}^{1} (1-t) \left[ \nabla^2 f(x + t \mu e_i) \right]_{ii} \,dt \right) \\
& \quad - \frac{1}{2 \mu} \left( f(x) - \nabla f(x)^T \mu e_i + \mu^2 \int_{0}^{1} (1-t) \left[ \nabla^2 f(x - t \mu e_i) \right]_{ii} \,dt \right) \\
&= \left[ \nabla f(x) \right]_i + \underbrace{ \frac{\mu}{2} \int_{0}^{1} (1-t) \left( \left[ \nabla^2 f(x + t \mu e_i) \right]_{ii} - \left[ \nabla^2 f(x - t \mu e_i) \right]_{ii} \right) \,dt }_{:= h_i(\mu,x)}
\end{aligned}
\end{equation*}

from which we can see that the estimator is Lipschitz continuous:
\begin{equation*}
\begin{aligned}
\left\| \hat{\nabla} f(x) - \hat{\nabla} f(y) \right\| & \leq
\left\| \nabla f(x) - \nabla f(y) \right\| \\
& \quad + \frac{\mu}{2} \sqrt{ \sum_{i=1}^d \left( \int_{0}^{1} \left\lvert 1-t \right\rvert \left\lvert \left[ \nabla^2 f(x + t \mu e_i) \right]_{ii} - \left[ \nabla^2 f(y + t \mu e_i) \right]_{ii} \right\rvert \,dt \right)^2 } \\
& \quad + \frac{\mu}{2} \sqrt{ \sum_{i=1}^d \left( \int_{0}^{1} \left\lvert 1-t \right\rvert \left\lvert \left[ \nabla^2 f(x - t \mu e_i) \right]_{ii} - \left[ \nabla^2 f(y - t \mu e_i) \right]_{ii} \right\rvert \,dt \right)^2 } \\
& \leq L_1 \left\| x-y \right\| + \mu \sqrt{ \sum_{i=1}^d \left( \int_{0}^{1} \left\lvert 1-t \right\rvert L_2 \left\| x-y \right\| \,dt \right)^2 } \\
&= L_1 \left\| x-y \right\| + \frac{\mu \sqrt{d} L_2}{2} \left\| x-y \right\| \\
&= \left( L_1 + \frac{\mu \sqrt{d} L_2}{2} \right)\left\| x-y \right\|
\end{aligned}
\end{equation*}

\bigbreak \noindent
We can write
\begin{equation*}
\hat{\nabla} f(x) = \nabla f(x) + h(\mu,x)
\end{equation*}
where the approximation error is bounded by
\begin{equation*}
\begin{aligned}
\left\| h(\mu,x) \right\| & \leq
\frac{\mu}{2} \sqrt{ \sum_{i=1}^d \left( \int_{0}^{1} |1-t| \left\lvert \left[ \nabla^2 f(x + t \mu e_i) \right]_{ii} - \left[ \nabla^2 f(y + t \mu e_i) \right]_{ii} \right\rvert \,dt \right)^2 } \\
& \leq \frac{\mu}{2} \sqrt{ \sum_{i=1}^d \left( \int_{0}^{1} |1-t| L_2 \left\| 2t \mu e_i \right\| \,dt \right)^2 } \\
& = \frac{\mu}{2} \sqrt{ \sum_{i=1}^d 2 L_2 \mu \left( \int_{0}^{1} t|1-t| \,dt \right)^2 } \\
&= \frac{\sqrt{d} L_2 \mu^2}{6}
\end{aligned}
\end{equation*}

\subsection{Hessian estimator}
Using Taylor expansion with integral remainder on a generic element of the Hessian estimator
\begin{equation*}
\begin{aligned}
\left[ \hat{\nabla}^2 f(x) \right]_i &= \frac{f(x + \mu e_i) - 2f(x) + f(x - \mu e_i)}{\mu^2} \\
&= \left[ \nabla^2 f(x) \right]_{ii} + \frac{\mu}{2} \int_{0}^{1} (1-t)^2 \left( \left[ \left( D^3 f \right)(x + t \mu e_i) \right]_{iii} - \left[ \left( D^3 f \right)(x - t \mu e_i) \right]_{iii} \right) \,dt
\end{aligned}
\end{equation*}

from which we can see that the estimator is Lipschitz continuous:
\begin{equation*}
\begin{aligned}
\left\| \hat{\nabla}^2 f(x) - \hat{\nabla}^2 f(y) \right\| & \leq
\sqrt{ \sum_{i=1}^d \left( \left[ \nabla^2 f(x) \right]_{ii} - \left[ \nabla^2 f(y) \right]_{ii} \right)^2 } \\
& \quad + \frac{\mu}{2} \sqrt{ \sum_{i=1}^d \left( \int_{0}^{1} (1-t)^2 \left\lvert \left[ \left( D^3 f \right)(x + t \mu e_i) \right]_{iii} - \left[ \left( D^3 f \right)(y + t \mu e_i) \right]_{iii} \right\rvert \,dt  \right)^2 } \\
& \quad + \frac{\mu}{2} \sqrt{ \sum_{i=1}^d \left( \int_{0}^{1} (1-t)^2 \left\lvert \left[ \left( D^3 f \right)(x - t \mu e_i) \right]_{iii} - \left[ \left( D^3 f \right)(y - t \mu e_i) \right]_{iii} \right\rvert \,dt \right)^2 } \\
& \leq L_2 \sqrt{d} \left\| x-y \right\| + \mu \sqrt{ \sum_{i=1}^d \left( \int_{0}^{1} (1-t)^2  L_3 \left\| x-y \right\| \,dt \right)^2 } \\
& \leq L_2 \sqrt{d} \left\| x-y \right\| + \frac{\mu \sqrt{d} L_3}{3} \left\| x-y \right\| \\
& = \left( L_2 \sqrt{d} + \frac{\mu \sqrt{d} L_3}{3} \right) \left\| x-y \right\|
\end{aligned}
\end{equation*}

\bigbreak \noindent
The approximation error is bounded by
\begin{equation*}
\begin{aligned}
\left\lvert \left[ \hat{\nabla}^2 f(x) \right]_{i} - \left[ \nabla^2 f(x)\right]_{ii} \right\rvert &=
\left\lvert \frac{\mu}{2} \int_{0}^{1} (1-t)^2 \left( \left[ \left( D^3 f \right)(x + t \mu e_i) \right]_{iii} - \left[ \left( D^3 f \right)(x - t \mu e_i) \right]_{iii} \right) \,dt \right\rvert \\
& \leq \frac{\mu}{2} \int_{0}^{1} (1-t)^2 \left\lvert \left[ \left( D^3 f \right)(x + t \mu e_i) \right]_{iii} - \left[ \left( D^3 f \right)(x - t \mu e_i) \right]_{iii} \right\rvert \,dt \\
& \leq \frac{\mu}{2} \int_{0}^{1} (1-t)^2 L_3 \left\| 2 t \mu e_i \right\| \,dt \\
& \leq \frac{L_3 \mu^2}{12} \quad i=1, \dots, d.
\end{aligned}
\end{equation*}

\bigbreak \noindent
We can now prove Lemma \ref{lemma:estimators_exact_quadratic_case}:
\begin{proof}
    Consider a generic quadratic function
    \begin{equation*}
        f(x) = \frac{1}{2} x^T A x + b^T x + c
        \qquad
        \nabla^2 f(x) = A
        \qquad
        \left[ \left( D^3 f \right)(x) \right]_{ijk} = 0 \quad \forall i,j,k
    \end{equation*}
    Then 
    \begin{equation*}
    \begin{aligned}
    \left[ \hat{\nabla} f(x) \right]_i &=
    \left[ \nabla f(x) \right]_i + \frac{\mu}{2} \int_{0}^{1} (1-t) \left( \left[ \nabla^2 f(x + t \mu e_i) \right]_{ii} - \left[ \nabla^2 f(x - t \mu e_i) \right]_{ii} \right) \,dt \\
    &= \left[ \nabla f(x) \right]_i + \frac{\mu}{2} \int_{0}^{1} (1-t) \left( \left[ A \right]_{ii} - \left[ A \right]_{ii} \right) \,dt
    = \left[ \nabla f(x) \right]_i
    \end{aligned}
    \end{equation*}
    \begin{equation*}
    \begin{aligned}
    \left[ \hat{\nabla}^2 f(x) \right]_i &=
    \left[ \nabla^2 f(x) \right]_{ii} + \frac{\mu}{2} \int_{0}^{1} (1-t)^2 \left( \left[ \left( D^3 f \right)(x + t \mu e_i) \right]_{iii} - \left[ \left( D^3 f \right)(x - t \mu e_i) \right]_{iii} \right) \,dt \\
    &= \left[ \nabla^2 f(x) \right]_{ii} + \frac{\mu}{2} \int_{0}^{1} (1-t)^2 \left( 0-0 \right) \,dt
    = \left[ \nabla^2 f(x) \right]_{ii}
    \end{aligned}
\end{equation*}
\end{proof}

\subsection{Jacobian of the gradient estimator}
A generic element of the Jacobian satisfies
\begin{equation*}
\begin{aligned}
\left[ \frac{\partial \hat{\nabla} f(x)}{\partial x} \right]_{ij}
& = \frac{ \nabla f(x + \mu e_i)^T e_j - \nabla f(x - \mu e_i)^T e_j }{2 \mu} \\
& = \frac{ \left[ \nabla f(x) \right]_j +  \mu \left[ \nabla^2 f(x) \right]_{ij} + \mu^2 \int_{0}^{1} (1-t) \left[ \left( D^3 f \right)(x + t \mu e_i) \right]_{iij} \,dt }{2 \mu} \\
& \quad - \frac{ \left[ \nabla f(x) \right]_j -  \mu \left[ \nabla^2 f(x) \right]_{ij} + \mu^2 \int_{0}^{1} (1-t) \left[ \left( D^3 f \right)(x - t \mu e_i) \right]_{iij} \,dt }{2 \mu} \\
& = \left[ \nabla^2 f(x) \right]_{ij} + \frac{\mu}{2} \int_{0}^{1} (1-t) \left( \left[ \left( D^3 f \right)(x + t \mu e_i) \right]_{iij} - \left[ \left( D^3 f \right)(x - t \mu e_i) \right]_{iij} \right) \,dt .
\end{aligned}
\end{equation*}
Define $R(\mu, x)$ as the $d \times d$ matrix which contains the integral terms above, so that
\begin{equation*}
\frac{\partial \hat{\nabla} f(x)}{\partial x} = \nabla^2 f(x) + R(\mu, x)
\end{equation*}
\begin{equation*}
\begin{aligned}
\left\lvert \left[ R(\mu, x) \right]_{ij} \right\rvert
&= \left\lvert \frac{\mu}{2} \int_{0}^{1} (1-t) \left( \left[ \left( D^3 f \right)(x + t \mu e_i) \right]_{iij} - \left[ \left( D^3 f \right)(x - t \mu e_i) \right]_{iij} \right) \,dt \right\rvert \\
& \leq \frac{\mu}{2} \int_{0}^{1} (1-t) L_3 \left\| 2t \mu e_i \right\| \,dt \\
&= \frac{\mu^2 L_3}{6}  \quad \forall i,j \quad \forall x
\end{aligned}
\end{equation*}

\subsection{Zeros of the gradient estimator}
Here we provide a proof of Lemma \ref{lemma:implicit_function}.

\begin{proof}
Let $x^\star$ be the unique global minimizer of the strongly convex function $f(x)$, for which $\nabla f(x^\star) = 0$. Given $\mu$, we want to characterize the set of solutions to the equation $\hat{\nabla}f(\mu, x) = 0$. Note that $\hat{\nabla}f(\mu, x)$ is twice differentiable in both its arguments $x$ and $\mu \neq 0$. Recall that
\begin{equation*}
\hat{\nabla} f(0,x) := \lim_{\mu \to 0} \hat{\nabla} f(\mu, x) = \nabla f(\mu,x)
\end{equation*}
and similarly for its derivatives, so that $\hat{\nabla} f(0,x^\star) = 0$.
The Jacobian matrix of $\hat{\nabla}f(\mu, x)$ is $ \left( D \hat{\nabla}f \right) \left( \mu, x \right) = \left[ \left( D_\mu \hat{\nabla}f \right) \left( \mu, x \right) , \left( D_x \hat{\nabla}f \right) \left( \mu, x \right) \right] :\ \mathbb{R}^{d+1} \to \mathbb{R}^{d \times d+1} $, where
\begin{equation*}
\left( D_\mu \hat{\nabla}f \right) \left( \mu, x \right) =
\begin{bmatrix}
\dfrac{\partial \left[ \hat{\nabla}f \right]_1}{\partial \mu} \\
\vdots \\
\dfrac{\partial \left[ \hat{\nabla}f \right]_d}{\partial \mu}
\end{bmatrix}_{\mu, x}
\qquad
\left( D_x \hat{\nabla}f \right) \left( \mu, x \right) =
\begin{bmatrix}
\dfrac{\partial \left[ \hat{\nabla}f \right]_1}{\partial x_1}  & \cdots & \dfrac{\partial \left[ \hat{\nabla}f \right]_1}{\partial x_d}  \\
\vdots & \ddots & \vdots \\
\dfrac{\partial \left[ \hat{\nabla}f \right]_d}{\partial x_1}  & \cdots & \dfrac{\partial \left[ \hat{\nabla}f \right]_d}{\partial x_d}
\end{bmatrix}_{\mu, x}
\end{equation*}
We want to apply the Implicit Function theorem, and to do so we need $\left( D_x \hat{\nabla}f \right) \left( 0, x^\star \right)$ to be invertible. Indeed,
\begin{equation*}
\begin{aligned}
\left[ \left( D_x \hat{\nabla}f \right) \left( 0, x^\star \right) \right]_{ij}
& = \lim_{\mu \to 0,\ x \to x^\star} \left[ \left( D_x \hat{\nabla}f \right) \left( \mu, x \right) \right]_{ij}
= \lim_{\mu \to 0,\ x \to x^\star} \left[ \dfrac{\partial \left[ \hat{\nabla}f \right]_i}{\partial x_j} \right]_{ij} \\
& = \lim_{\mu \to 0,\ x \to x^\star} \frac{1}{2 \mu} \left( \dfrac{\partial f(x + \mu e_i)}{\partial x_j} - \dfrac{\partial f(x - \mu e_i)}{\partial x_j} \right) \\
& = \dfrac{\partial^2 f(x^\star)}{\partial x_i \partial x_j}
= \left[ \nabla^2 f(x^\star) \right]_{ij}
\end{aligned}
\end{equation*}
and therefore $\left( D_x \hat{\nabla}f \right) \left( 0, x^\star \right) = \nabla^2 f(x^\star) > 0$ is invertible. We also note that
\begin{equation*}
\begin{aligned}
\left[ \left( D_\mu \hat{\nabla}f \right) \left( 0, \Gamma(0) \right) \right]_i
&= \left. \dfrac{\partial \left[ \hat{\nabla}f \right]_1}{\partial \mu} \right\rvert_{0,x^\star}
= \left. \frac{\partial}{\partial \mu} \left[ \frac{f(x + \mu e_i) - f(x - \mu e_i)}{2 \mu} \right] \right\rvert_{0,x^\star} \\
&= \left. \frac{\partial}{\partial \mu} \left[ \frac{\mu}{2} \int_{0}^{1} (1-t) \left( \left[ \nabla^2 f(x + t \mu e_i) \right]_{ii} - \left[ \nabla^2 f(x - t \mu e_i) \right]_{ii} \right) \,dt \right] \right\rvert_{0,x^\star} \\
&= \left. \left[ \frac{1}{2} \int_{0}^{1} (1-t) \left( \left[ \nabla^2 f(x + t \mu e_i) \right]_{ii} - \left[ \nabla^2 f(x - t \mu e_i) \right]_{ii} \right) \,dt \right] \right\rvert_{0,x^\star} \\
& \quad \left. + \left[ \frac{\mu}{2} \frac{\partial}{\partial \mu}  \int_{0}^{1} (1-t) \left( \left[ \nabla^2 f(x + t \mu e_i) \right]_{ii} - \left[ \nabla^2 f(x - t \mu e_i) \right]_{ii} \right) \,dt \right] \right\rvert_{0,x^\star} \\
&=  0 + \left. \left[ \frac{\mu}{2} \int_{0}^{1} (1-t) \left( \left[ \frac{\partial}{\partial \mu} \nabla^2 f(x + t \mu e_i) \right]_{ii} - \left[ \frac{\partial}{\partial \mu} \nabla^2 f(x - t \mu e_i) \right]_{ii} \right) \,dt \right] \right\rvert_{0,x^\star} = 0
\end{aligned}
\end{equation*}
where we applied Leibniz integral rule and we used the fact that by Lipschitz continuity of the Hessian
\begin{equation*}
\begin{aligned}
\left\lvert \int_{0}^{1} (1-t) \left[ \frac{\partial}{\partial \mu} \nabla^2 f(x + t \mu e_i) \right]_{ii} \,dt \right\rvert
& \leq \int_{0}^{1} |1-t| \left\lvert \left[ \frac{\partial}{\partial \mu} \nabla^2 f(x + t \mu e_i) \right]_{ii} \right\rvert \,dt \\
& = \int_{0}^{1} |1-t| \lim_{\Delta \mu \to 0} \left[ \left\lvert \nabla^2 f(x + t \mu e_i + t \Delta \mu e_i) - \nabla^2 f(x + t \mu e_i) \right\rvert \right]_{ii} \,dt \\
& \leq \int_{0}^{1} |1-t| \lim_{\Delta \mu \to 0} L_2 \left\| t \Delta \mu e_i \right\| \,dt = 0
\end{aligned}
\end{equation*}
By the Implicit Function theorem, there exist $\bar{\mu}, \delta' > 0$ and a continuously differentiable function $\Gamma:\ \mathbb{R} \to \mathbb{R}^d$ such that for all $|\mu - 0| = \mu \leq \bar{\mu}$ there is a unique $x = \Gamma(\mu) \in \mathbb{R}^d$ such that $\left\| \Gamma(\mu) - x^\star \right\| \leq \delta'$ and $\hat{\nabla} f(\mu, \Gamma(\mu)) = 0$.

\bigbreak \noindent
We are now left to show that $\left\| \Gamma(\mu) - x^\star \right\| \leq \mu^2 \delta$ for some positive $\delta$. Using Taylor expansion we can write
\begin{equation*}
\begin{aligned}
\left\| \Gamma(\mu) - x^\star \right\|
&= \left\| \Gamma(0) + \mu \left( D \Gamma \right)(0) + \mu^2 \int_{0}^{1} (1-t) \left( D^2 \Gamma \right)(\mu) \,dt - x^\star \right\| \\
&= \mu^2 \left\| \int_{0}^{1} (1-t) \left( D \Gamma \right)(\mu) \,dt \right\|
\leq \mu^2 \delta
\end{aligned}
\end{equation*}
where we used the fact that $\Gamma(0) = x^\star$ and $\left( D \Gamma \right)(0) = 0$. Indeed, by the Implicit Function theorem, for all $ \mu \leq \bar{\mu}$ the Jacobian of $\Gamma(\mu)$ is given by
\begin{equation*}
\left( D \Gamma \right)(\mu) = - \left[ \left( D_x \hat{\nabla}f \right) \left( \mu, \Gamma(\mu) \right) \right]^{-1} \left( D_\mu \hat{\nabla}f \right) \left( \mu, \Gamma(\mu) \right)
\end{equation*}
and we already proved that $\left( D_\mu \hat{\nabla}f \right) \left( 0, \Gamma(0) \right) = 0$.

\end{proof}

\subsection{Square norm of the gradient estimator}
In the proof of Theorem \ref{theorem:convergence_ZO_JADE} we use the square norm of the gradient estimator as a Lyapunov function, and to do so we need to analyze some properties of this quantity. In the following we use the notation $\Gamma_\mu = \Gamma(\mu)$.

\subsubsection{Upper bound}
Using Lipschitz continuity, the norm of the gradient estimator can be bounded by
\begin{equation*}
\left\| \hat{\nabla} f(x) \right\|^2
= \left\| \hat{\nabla} f(x) - \hat{\nabla} f(\Gamma_\mu) \right\|^2
\leq \left( L_1 + \frac{\mu \sqrt{d} L_2}{2} \right)^2 \left\| x-\Gamma_\mu \right\|^2
\end{equation*}

\subsubsection{Lower bound}
We consider the Taylor expansions
\begin{equation*}
\left\| \hat{\nabla} f(x) \right\|^2
= \left\| \hat{\nabla} f(\Gamma_\mu) \right\|^2 + (x - \Gamma_\mu)^T \int_0^1 \frac{\partial \left\| \hat{\nabla} f \left( \Gamma_\mu + t(x-\Gamma_\mu) \right) \right\|^2 }{\partial x} \, dt
\end{equation*}
\begin{equation*}
\hat{\nabla} f\left( \Gamma_\mu + t(x-\Gamma_\mu) \right) =  \hat{\nabla} f(\Gamma_\mu) + \int_0^1 \frac{\partial \hat{\nabla} f \left( \Gamma_\mu + pt(x-\Gamma_\mu) \right)}{\partial x} \, dp \cdot t(x - \Gamma_\mu) .
\end{equation*}
Recalling that $\hat{\nabla} f(\Gamma_\mu) = 0$ and $\frac{\partial \hat{\nabla} f(x)}{\partial x} = \nabla^2 f(x) + R(\mu, x)$, and using the notation
\begin{equation*}
\frac{\partial \hat{\nabla} f \left( \Gamma_\mu + t(x-\Gamma_\mu) \right) }{\partial x} = \nabla_t^2 + R_t
\qquad
\frac{\partial \hat{\nabla} f \left( \Gamma_\mu + pt(x-\Gamma_\mu) \right) }{\partial x} = \nabla_{pt}^2 + R_{pt}
\end{equation*}
we can write
\begin{equation*}
\begin{aligned}
\frac{\partial \left\| \hat{\nabla} f \left( \Gamma_\mu + t(x-\Gamma_\mu) \right) \right\|^2 }{\partial x}
&= 2 \frac{\partial \hat{\nabla} f \left( \Gamma_\mu + t(x-\Gamma_\mu) \right) }{\partial x}^T \hat{\nabla} f \left( \Gamma_\mu + t(x-\Gamma_\mu) \right) \\
&= 2 \frac{\partial \hat{\nabla} f \left( \Gamma_\mu + t(x-\Gamma_\mu) \right) }{\partial x}^T \int_0^1 \frac{\partial \hat{\nabla} f \left( \Gamma_\mu + pt(x-\Gamma_\mu) \right)}{\partial x} \, dp \cdot t(x - \Gamma_\mu) \\
&= 2 \int_0^1 \frac{\partial \hat{\nabla} f \left( \Gamma_\mu + t(x-\Gamma_\mu) \right) }{\partial x}^T \frac{\partial \hat{\nabla} f \left( \Gamma_\mu + pt(x-\Gamma_\mu) \right)}{\partial x} \, dp \cdot t(x - \Gamma_\mu) \\
&= 2 \int_0^1 \left( \nabla_t^2 + R_t \right)^T \left( \nabla_{pt}^2 + R_{pt} \right) \, dp \cdot t(x - \Gamma_\mu) \\
\end{aligned}
\end{equation*}
Plugging this result into the quantity of interest and using
\begin{equation*}
\sigma_{\text{max}}(R) =
\sqrt{ \lambda_{\text{max}}(R^T R) } =
\sqrt{ \left\| R^T R \right\| } \leq
\sqrt{ d \left\| R^T R \right\|_{\text{max}} } \leq
\sqrt{ d^2 \left( \frac{\mu^2 L_3}{6} \right)^2 } =
\frac{d \mu^2 L_3}{6}
\end{equation*}
we get
\begin{equation*}
\begin{aligned}
\left\| \hat{\nabla} f(x) \right\|^2
&= (x - \Gamma_\mu)^T \int_0^1 2t \int_0^1 \nabla_t^2 \nabla_{pt}^2 + \nabla_t^2 R_{pt} + R_t^T \nabla_{pt}^2 + R_t^T R_{pt} \, dp \, dt \cdot (x - \Gamma_\mu)\\
& \geq \left\| x - \Gamma_\mu \right\|^2 \left( m^2 - 2 L_1 \frac{d \mu^2 L_3}{6} -\left( \frac{d \mu^2 L_3}{6} \right)^2 \right)
\end{aligned}
\end{equation*}
which for $L_3 \neq 0$ is a positive number provided that
\begin{equation*}
\mu < \sqrt{ \frac{6 \left( \sqrt{L_1^2 + m^2} - L_1 \right) }{d L_3} } .
\end{equation*}

\subsubsection{Norm of the partial derivative}
We want to obtain an upper bound on 
\begin{equation*}
\begin{aligned}
\left\| \frac{\partial \left\| \hat{\nabla} f(x) \right\|^2 }{\partial x} \right\| &=
\left\| 2 \hat{\nabla} f(x)^T \frac{\partial \hat{\nabla} f(x)}{\partial x} \right\| \\
&\leq \left\| 2 \hat{\nabla} f(x)^T \nabla^2 f(x) \right\| + \left\| 2 \hat{\nabla} f(x)^T R(\mu, x) \right\| \\
& \leq 2 L_1 \left\| \hat{\nabla} f(x) \right\| + \frac{\mu^2 L_3 d}{3} \left\| \hat{\nabla} f(x) \right\| \\
& \leq \left(2 L_1 + \frac{\mu L_3 d}{3} \right) \left( L_1 + \frac{\mu \sqrt{d} L_2}{2} \right) \left\| x-\Gamma_\mu \right\|
\end{aligned}
\end{equation*}
Where we used 
\begin{equation*}
\hat{\nabla} f(x)^T R(\mu, x) = \hat{\nabla} f(x)^T [R_1 \cdots R_d]
\qquad
\hat{\nabla} f(x)^T R_i
\leq \left\| \hat{\nabla} f(x) \right\| \left\| R_i \right\|
\leq \frac{\mu L_3 \sqrt{d}}{6} \left\| \hat{\nabla} f(x) \right\|
\end{equation*}

\subsubsection{Descent direction}
We want to prove that
\begin{equation*}
2 \hat{\nabla} f(x)^T \frac{\partial \hat{\nabla} f(x)}{\partial x} \left( - \hat{\nabla} f(x) \oslash \hat{\nabla}^2 f(x) \right)
\leq \alpha \left\| x - x^\star \right\|^2
\end{equation*}
for some negative scalar $\alpha$. We can rewrite the quantity of interest as
\begin{equation*}
\begin{aligned}
& - 2 \hat{\nabla} f(x)^T \left[ \frac{\partial \hat{\nabla} f(x)}{\partial x} \text{ diag}\left( \mathbb{1} \oslash \hat{\nabla}^2 f(x) \right) \right] \hat{\nabla} f(x) \\
& = - 2 \hat{\nabla} f(x)^T \left[ \nabla^2 f(x) \text{ diag}\left( \mathbb{1} \oslash \hat{\nabla}^2 f(x) \right) \right] \hat{\nabla} f(x)
- 2 \hat{\nabla} f(x)^T \left[ R(\mu, x) \text{ diag}\left( \mathbb{1} \oslash \hat{\nabla}^2 f(x) \right) \right] \hat{\nabla} f(x) \\
& \leq - \left\| \hat{\nabla} f(x) \right\|^2 \sigma_{\text{min}} \left[ \nabla^2 f(x) \text{ diag}\left( \mathbb{1} \oslash \hat{\nabla}^2 f(x) \right) \right]
+ \left\| \hat{\nabla} f(x) \right\|^2 \sigma_{\text{max}} \left[ R(\mu, x) \text{ diag}\left( \mathbb{1} \oslash \hat{\nabla}^2 f(x) \right) \right] \\
& \leq - \left\| \hat{\nabla} f(x) \right\|^2 \sigma_{\text{min}} \left[ \nabla^2 f(x) \right] \sigma_{\text{min}} \left[ \text{ diag}\left( \mathbb{1} \oslash \hat{\nabla}^2 f(x) \right) \right]
+ \left\| \hat{\nabla} f(x) \right\|^2 \sigma_{\text{max}}\left[ R(\mu, x) \right] \sigma_{\text{max}} \left[ \text{ diag}\left( \mathbb{1} \oslash \hat{\nabla}^2 f(x) \right) \right] \\
& \leq \left\| \hat{\nabla} f(x) \right\|^2 \left( -m \dfrac{1}{L_1 + \frac{L_3 \mu^2}{12} } + \frac{d \mu^2 L_3}{6} \frac{1}{m - \frac{L_3 \mu^2}{12} } \right) \\
& \leq \left( \frac{2 d \mu^2 L_3}{2m - L_3 \mu^2} -\frac{12m}{12 L_1 + L_3 \mu^2}  \right) \left( L_1 + \frac{\mu \sqrt{d} L_2}{2} \right)^2 \left\| x -\Gamma_\mu \right\|^2
\end{aligned}
\end{equation*}
and when $L_3 \neq 0$ the coefficient is negative provided that 
\begin{equation*}
\mu < \sqrt{ 3 \frac{ \sqrt{4d^2 L_1^2 + m^2 + 4dL_1 m + 8 m^2 d} - 2d L_1 - m }{d L_3} } .
\end{equation*}

\bibliographystyle{IEEEtran}
\bibliography{IEEEfull, references}

\end{document}